\numberwithin{equation}{section}
\newtheorem{defn}[equation]{Definition}
\newtheorem{rem}[equation]{Remark}
\newtheorem{exm}[equation]{Example}
\newtheorem{lemma}[equation]{Lemma}
\newtheorem{notat}[equation]{Notation}
\newtheorem{newpar}[equation]{}
\newtheorem{xdefn}{Definition.}
\newtheorem{xproposition}{Proposition.}
\newtheorem{xcorollary}{Corollary.}
\newtheorem{xrem}{Remark.}
\newtheorem{xexm}{Example.}
\newtheorem{xlemma}{Lemma.}
\newtheorem{xtheorem}{Theorem.}
\newtheorem{xnotat}{Notation.}
\newtheorem{xnewpar}{\it}
\newtheorem{xproof}{{\it Proof. }}
\newtheorem{xproofof}{{\it Proof}}
\newenvironment{definition}{\begin{defn}\em}{\end{defn}}
\newenvironment{remark}{\begin{rem}\em}{\end{rem}}
\newenvironment{proof}{\begin{xproof}\em}{\end{xproof}}
\newenvironment{newparagraph*}[1]{\begin{xnewpar}\hspace*{-1.5mm}{#1}. \rm}{\end{xnewpar}}
\newenvironment{definition*}{\begin{xdefn}\em}{\end{xdefn}}
\newenvironment{remark*}{\begin{xrem}\em}{\end{xrem}}
\newenvironment{example*}{\begin{xexm}\em}{\end{xexm}}
\newenvironment{notation*}{\begin{xnotat}\em}{\end{xnotat}}
\newenvironment{proposition*}{\begin{xproposition}}{\end{xproposition}}
\newenvironment{corollary*}{\begin{xcorollary}}{\end{xcorollary}}
\newenvironment{lemma*}{\begin{xlemma}}{\end{xlemma}}
\newenvironment{theorem*}{\begin{xtheorem}}{\end{xtheorem}}
\titleformat*{\section}{\large\bfseries}
\begin{document}

\title{A Logic for Aristotle's Modal Syllogistic}

\author{Clarence Lewis Protin$^1$\footnote{The author declares that there is no conflict of interest.}}

\date{$^1$Independent Researcher, cprotin@sapo.pt\\[2ex]%
	\today}

\maketitle
\begin{abstract}
We propose a new  modal logic endowed with a simple deductive system  to interpret Aristotle's theory of the modal syllogism. While being inspired by standard propositional modal logic it is also a logic of terms that admits a (sound) extensional semantics involving possible states-of-affairs in a given world. Applied to the analysis of Aristotle's modal syllogistic as found in the \emph{Prior Analytics} A8-22 it sheds light on various fine-grained distinctions which when made allow us  to  clarify some ambiguities and obtain a completely consistent system and prove all of the modal syllogisms  considered valid by  Aristotle.This logic allows us also to make a  connection with the  axioms of modern propositional modal logic and to perceive to what extent these are implicit in Aristotle's reasoning.  Further work wil involve addressing  the question of the completeness of this logic (or variants thereof)  together with the extension of the logic to include a calculus of relations (for instance the relational syllogistic treated in Galen's \emph{Introduction to Logic}) which Slomkowsky has argued is already found in the \emph{Topics}.\\

Keywords: \emph{Syllogism, Aristotle, Modal Syllogism, Modal Logic, Non-classical Logics; Semantics of Natural Language, Algebraic Logic, History of Logic}.

\end{abstract}


\section{Introduction}

Aristotle's theory of the "modal" syllogism in the \emph{Prior Analytics} A8-22 has traditionally been considered one of the most problematic parts of the entire Aristotelic \emph{corpus}. Opinions as to the overall consistency and merit of this theory have generally been unfavourable (see for instance \cite{kne, rijen, Stri}, \cite{call}[p.1]).
The different notions of  "contingent", "possible" or "necessary"  predication seem to be inseparably tied to Aristotle's physics and ontology\cite{pat, call, thom} and furthermore do not seem to be used by Aristotle in an entirely consistent way in his proofs of the validity of the different syllogisms or his counter-examples. While much of Aristotle's theory seems to anticipate modern formal logic in a striking way (in particular his use of concrete models to disprove the validity of certain syllogisms and his use, as the modern mathematical logician,  of a metalogic\footnote{We use the term "metalogic" to refer to the reasoning processes Aristotle employs to deduce results about syllogisms.} to formally prove the validity of syllogisms) there is also much that  seems quite intractable to modern methods. Since the days of Bocheński  and   Łukasiewicz  and specially more recently there has been an active interest in interpreting the modal syllogistic using modern formal methods\cite{ rini, malink, bo, luka}. But these approaches end up being quite involved and tied up with semantic and ontological problems.

 Aristotle's concept of "term" would seem to correspond to either an individual or to a predicate (in general unary). Such terms have an \emph{extension}, the totality of individuals which fall under them. The extensionalist interpretation of the predication \emph{all $A$ is $B$} is that all individuals which fall under $A$ also fall under $B$: in modern predicate logic, $\forall x. A(x) \rightarrow B(x)$.  This interpretation
is that of the standard Tarskian semantics of classical first-order logic. It is an interpretation which finds its most adequate use in  \emph{mathematics}.  It is certain that Aristotle was highly influenced by the mathematics of his time in developing the theory in the \emph{Analytics}. But the goal of the \emph{Organon} was to develop a general methodology for valid reasoning in philosophy and science, to find a formal logic for \emph{natural language based} reasoning. It is not necessary to recall here the serious problems of extensionalism (at least in its naive form) once we leave the domain of mathematics. Frege already remarked that in \emph{All men are mortal}
we are not referring to the relationship between the unknown, vague and context-dependent extensions of the concepts of Man and Mortal but rather to a relationship of \emph{concepts}: our concept of $Man$ includes the concept of $Mortal$. It is clear that if we employ extensions then these must be partial, local, well-defined and reflect epistemic and spatio-temporal limitations.

We point out that the "logic" present in the \emph{Topics}, which probably reflects to a large extent the rules of formal debate prevalent in Greek philosophical schools since the time of Socrates,  is clearly much more  language-based than that of the \emph{Analytics} as well as having a richer ontological texture. The \emph{Topics} also contain implicitly  and in a more intensional form much of the reasoning in the \emph{Analytics} (cf. Malink argues for this in \cite{malink}[chs. 8-9]). The  syllogistic theory of the \emph{Analytics} might be said to hint at a greater importance of extension in the choice of terminology (major, minor or middle premise), in some of the arguments used to establish the validity of syllogisms and the importance of mathematical examples. As mentioned above, Aristotle's use of counter-examples in the treatment of the modal syllogism   anticipates modern modal logic. But what is clear is that in the \emph{Analytics} we have categorical and ontological neutrality (in modern terminology, terms are not "typed"):   the category of a term, or its being a genus, species, proper or accident, and the particular type of relationship between the two terms (species to genus, accident to substance, etc.)  is completely immaterial. As Striker put it in \cite{Stri}: \emph{that assertoric syllogistic is, as it were, metaphysically neutral from the point of view of Aristotle’s ontology}.  However Aristotle' became conscious of difficulties in this approach to the modal syllogism and invoked the elimination of considerations of time in predication (I Pr. An. 34b 6 -11).

In this work we consider  a type-neutral formal logic  in the style of the \emph{Analytics}. Although we will provide a sound extensional semantics for our logic (and be guided by the concrete intuition it provides), since the logic does not contain an equality predicate it can also be interpreted intentionally as a logic of concepts.  The  neutral position we take is that terms \emph{have} extensions but are not  to be identified with their extension and hence term equality should not be conflated with equality of the corresponding extensions.

 For reasons of both elegance and faithfulness to the structure of Aristotle's logic, we do not employ modern predicate logic as in \cite{luka, rini} for instance. Modern classical propositional logic (anticipated by the Stoics but also clearly present in portions of the \emph{Analytics}) is also clearly inadequate.  Our logic, called Aristotelic Modal Logic (AML),  is a fragment of minimal propositional logic built over a rudimentary modal term calculus. It is given a simple sound  semantics in which each model consists of a collection of possible state-of-affairs for a given world.  Connections can be made with algebraic logic, in particular Boolean algebras with operators (modal algebras). Our approach is in fact inspired by Boole's original method of formalising the (assertoric) syllogism.  As a term calculus AML incorporates term negation and both particular and universal predication as primitive notions as well as two forms of necessitation $\square$ and $\sqcup$  which give rise to two fundamental forms of possibility/contingency which correspond to
 two different term-operators $\lozenge$ and $\ominus$ that can be applied to a term.  Aristotle does appear to have made such a distinction as seen in his use of the terms   \emph{endekhomenon} and \emph{dunaton} . AML  also expresses several  fundamental axioms of S5 modal logic which we find were implicity used by Aristotle.   Different combinations of the above operators allow us to distinguish up to eight different kinds of contingent predication of one term to another.  We are also lead to distinguish between a weak and strong form of assertoric predication.  Many of the difficulties in Aristotle's theory can be traced to a confusion in certain instances between a certain type of "contingent", weak assertoric, strong assertoric and "necessary" predication.

  Equipped with these concepts we can make a fine analysis of the entirety of the theory of the syllogism in A4-A22 and explain the ambiguities and alleged errors and inconsistencies in Aristotle's theory by making the concepts precise and selecting adequate interpretations within AML so as to obtain a completely consistent interpretation with only minor alterations or clarifications.  
 
  In what follows we will make use of the Ross edition of the text of the Prior Analytics\cite{Ros}.

\section{Aristotelic Modal  Logic}

\begin{definition}

The terms and formulas of \emph{Aristotelic Modal Logic} (AML) over a countable set $T$ of atomic terms $t_1, t_2,...,t_n,...$ are  defined as follows:
\begin{itemize}

 \item A \emph{term} is either an atomic term  $t$ or an expression of the form
$A^c$, $\square A$ or $\sqcup A$ where $A$ and $B$ are terms.

\item Atomic formulas are  defined as follows: if $A$ and $B$ are terms then   $A\rightarrow B$ and $A \leadsto B$ are atomic formulas. The last two are called 
\emph{universal} and \emph{particular} formulas.
\item Formulas are either atomic formulas or expressions of the form $\phi\enspace \&\enspace \psi$ or $\phi\Rightarrow \psi$ where $\phi$ and $\psi$ are  formulas. 

\end{itemize}
\end{definition}



We use the notation $\lozenge A = (\square A^c)^c$ and $\ominus A = (\sqcup A)^c$. $\ominus$ represents bidirectional possibility which we call \emph{bicontingency} (see the following remark).
By analogy we call $\sqcup$ the \emph{binecessity} operator.
\begin{remark}
We could also introduce $\wedge$  as a primitive binary term operator in place of $\leadsto$ and $\sqcup$. Then we  could define $A\leadsto B$ as $\star(A\wedge B)$ and $\sqcup A$ as $(\lozenge A \wedge \lozenge A^c)^c$. But the presentation above is chosen to be more faithful to Aristotle's approach.
\end{remark}

We present  the deductive system of AML in natural deduction style.
We have the usual  rules for $\&$ and $\Rightarrow$ for minimal propositional logic:

\vspace{5mm}

\AxiomC{$\phi$ }\AxiomC{$\phi\Rightarrow \psi$ }\RightLabel{MP}\BinaryInfC{$\psi$}
\DisplayProof

\vspace{5mm}

\AxiomC{$[\phi]$ }
\noLine
\UnaryInfC{$\psi$ }\RightLabel{\enspace $\Rightarrow$I}
\UnaryInfC{$\phi \Rightarrow \psi$}
\DisplayProof 

\vspace{5mm}

\AxiomC{$\phi$ }\AxiomC{$ \psi$ }\RightLabel{\enspace $\&$I}
\BinaryInfC{$\phi \enspace \&\enspace \psi$}
\DisplayProof

\vspace{5mm}

\AxiomC{$\phi_1 \enspace \& \enspace \phi_2$ } 
\RightLabel{\enspace $\&$E$_i \enspace ( i = 1,2 )$}
\UnaryInfC{$\psi_i$}
\DisplayProof

\vspace{5mm}

Then we have rules for $(\enspace)^c$ and $\sqcup$:

\vspace{5mm}

\AxiomC{$A$ }\RightLabel{\enspace cEx}

\UnaryInfC{$A'$}
\DisplayProof

\vspace{5mm}
\emph{Where $A'$ results from $A$ by replacing any number of occurrences of $C^{cc}$ with
$C$ or $C^{cc}$ with $C$ for a subterms $C$ of $A$}.

\vspace{5mm}

\AxiomC{$A$ }\RightLabel{\enspace $\sqcup$Ex}

\UnaryInfC{$A'$}
\DisplayProof

\vspace{5mm}
\emph{Where $A'$ results from $A$ by replacing any number of occurrences of $\sqcup C^{c}$ with
	$\sqcup C$ or $\sqcup C$ with $\sqcup C^c$ for a subterm $C$ of $A$}.

\vspace{5mm}

\AxiomC{$A\rightarrow B$ }\AxiomC{$B\rightarrow C$ }\RightLabel{\enspace $\rightarrow$T}
\BinaryInfC{$A\rightarrow C$}
\DisplayProof

\vspace{5mm}


\AxiomC{$A\rightsquigarrow B$ }\RightLabel{\enspace$\leadsto$C}
\UnaryInfC{$B\rightsquigarrow A$}
\DisplayProof
\vspace{5mm}

\AxiomC{$A\rightsquigarrow B$ }\RightLabel{\enspace$\rightsquigarrow$I}
\UnaryInfC{$A\rightsquigarrow A$}
\DisplayProof
\vspace{5mm}

\AxiomC{$A\rightsquigarrow B$ }\AxiomC{$B\rightarrow C$ }\RightLabel{\enspace$\leadsto$T}

\BinaryInfC{$A\rightsquigarrow C$}
\DisplayProof

\vspace{5mm}
\AxiomC{$A\rightarrow  B^c$ }\RightLabel{\enspace cC}
\UnaryInfC{$B\rightarrow  A^c$}
\DisplayProof
\vspace{5mm}

\vspace{5mm}
\AxiomC{$A\rightarrow  B$ }\RightLabel{\enspace K}
\UnaryInfC{$\square A\rightarrow  \square B$}
\DisplayProof
\vspace{5mm}

	\AxiomC{$\square A\rightarrow B$ }\AxiomC{$\square A^c\rightarrow B$ }\RightLabel{\enspace $\sqcup$I}
\BinaryInfC{$\sqcup A\rightarrow B$}
\DisplayProof
\vspace{5mm}

Finally we have the axioms:

\[\square A \rightarrow \sqcup A \tag{S} \]
\[ \square A \rightarrow A \tag{T}\]
\[  \square A \rightarrow \square \square A \tag{$\mathbf{4}$}\]

We use the notation $\star A$ for $A\rightsquigarrow A$. $\star A$ is to be interpreted as expressing the fact that the term $A$ has non-empty extension.

\begin{lemma}
In AML we can derive:

\[\ominus A \rightarrow \lozenge A^c \tag{i}\]
\[ A \rightarrow \lozenge A \tag{ii}\]
\[\ominus A \rightarrow \lozenge A \tag{iii}\]
\[\square A^c \rightarrow \sqcup A \tag{iv}\]
\[   \lozenge \lozenge A \rightarrow \lozenge A \tag{v}\]
\[\lozenge\ominus A \rightarrow \ominus A \tag{vi}\]
\[  \lozenge \square A \rightarrow \lozenge A  \tag{vii} \]
We also get a derived rule $\ominus$Ex in which we can interchange  $\ominus A$ and $ \ominus A^c$ and the derived rules:\\

\AxiomC{$\star A$ }\RightLabel{\enspace$\star \lozenge$}
\UnaryInfC{$\star \lozenge A$}
\DisplayProof
\vspace{5mm}

\AxiomC{$\star \square A$ }\RightLabel{\enspace$\star\square $}
\UnaryInfC{$\star A$}
\DisplayProof
\vspace{5mm}

\AxiomC{$\star A$}
\AxiomC{$A\rightarrow B$ }\RightLabel{\enspace$\rightarrow$W}
\BinaryInfC{$A\rightsquigarrow B$}
\DisplayProof

\vspace{5mm}

	\AxiomC{$ A\rightarrow \lozenge B$ }\AxiomC{$A\rightarrow \lozenge B^c$ }\RightLabel{\enspace $\ominus$I}
\BinaryInfC{$A\rightarrow \ominus B$}
\DisplayProof

	\vspace{5mm}
\AxiomC{$A\rightarrow   B$ }\RightLabel{\enspace $\lozenge$T}
\UnaryInfC{$\lozenge A\rightarrow  \lozenge B$}
\DisplayProof
\vspace{5mm}

\vspace{5mm}
\AxiomC{$A\rightarrow  \lozenge B$ }
\UnaryInfC{$\lozenge A\rightarrow  \lozenge B$}
\DisplayProof
\vspace{5mm}

\AxiomC{$A\rightsquigarrow  B$ }\RightLabel{\enspace $\rightsquigarrow$K}
\UnaryInfC{$\lozenge A\rightsquigarrow  \lozenge B$}
\DisplayProof

\vspace{5mm}
\AxiomC{$A\rightarrow  \square B$ }
\UnaryInfC{$\square A\rightarrow  \square B$}
\DisplayProof
\vspace{5mm}

\end{lemma}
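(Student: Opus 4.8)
The plan is to derive everything purely syntactically, and to set up three bits of bookkeeping first. (a) cEx together with cC yields unrestricted contraposition: from $A\rightarrow B$, inserting a double negation on the right by cEx gives $A\rightarrow B^{cc}$, and cC then gives $B^c\rightarrow A^c$; dually $A^c\rightarrow B^c$ gives $B\rightarrow A$. (b) Since cEx collapses and introduces double negations on subterms, the defined operators unfold transparently up to provable equality: $\lozenge A=(\square A^c)^c$, $\lozenge A^c=(\square A)^c$, $\lozenge\lozenge A=(\square\square A^c)^c$, $\lozenge\square A=(\square(\square A)^c)^c$, $\ominus A=(\sqcup A)^c$, and $\lozenge\ominus A=(\square\sqcup A)^c$. (c) $\sqcup$Ex exchanges $\sqcup C$ with $\sqcup C^c$, which immediately yields the announced $\ominus$Ex, since $\ominus C=(\sqcup C)^c$.

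Given (a)--(c), items (i)--(v) and (vii) are short. For (ii), apply (T) at $A^c$ to get $\square A^c\rightarrow A^c$ and contrapose. For (iv), apply (S) at $A^c$ to get $\square A^c\rightarrow\sqcup A^c$ and rewrite the right side to $\sqcup A$ by $\sqcup$Ex. For (i), contrapose (S) at $A$, namely $\square A\rightarrow\sqcup A$, to $(\sqcup A)^c\rightarrow(\square A)^c$, i.e.\ $\ominus A\rightarrow\lozenge A^c$; (iii) is the same with (S) at $A^c$ and one use of $\sqcup$Ex. For (v), contrapose (4) at $A^c$. For (vii), contrapose (T) at $A$ to $A^c\rightarrow(\square A)^c$, push under $\square$ by K to $\square A^c\rightarrow\square(\square A)^c$, and contrapose again to $\lozenge\square A\rightarrow\lozenge A$.

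The one step with real content is (vi). Here the plan is to prove $\sqcup A\rightarrow\square\sqcup A$ and then contrapose, using $\lozenge\ominus A=(\square\sqcup A)^c$ and $\ominus A=(\sqcup A)^c$. To obtain $\sqcup A\rightarrow\square\sqcup A$ I would apply $\sqcup$I, reducing it to $\square A\rightarrow\square\sqcup A$ and $\square A^c\rightarrow\square\sqcup A$; both are handled the same way, namely from (S) we get $\square A\rightarrow\sqcup A$ (resp.\ $\square A^c\rightarrow\sqcup A$ after $\sqcup$Ex), applying K pushes this to $\square\square A\rightarrow\square\sqcup A$ (resp.\ $\square\square A^c\rightarrow\square\sqcup A$), and precomposing with (4) at $A$ (resp.\ at $A^c$) via $\rightarrow$T finishes it. Recognising this $S5$-flavoured fact, that binecessity propagates under $\square$ because of (S), (4) and K, is the crux; everything else is routine. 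The derived rule $\ominus$I runs on the same engine: contrapose the hypotheses $A\rightarrow\lozenge B$ and $A\rightarrow\lozenge B^c$ (after cEx) to $\square B^c\rightarrow A^c$ and $\square B\rightarrow A^c$, apply $\sqcup$I to get $\sqcup B\rightarrow A^c$, and contrapose to $A\rightarrow\ominus B$.

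Finally, the $\rightsquigarrow$-rules reduce to (ii), (T), and the structural rules $\rightsquigarrow$I, $\leadsto$C, $\leadsto$T. The rule $\rightarrow$W is just $\leadsto$T applied to $\star A=A\rightsquigarrow A$ and $A\rightarrow B$. For $\star\lozenge$, chain $A\rightsquigarrow A$ with $A\rightarrow\lozenge A$ from (ii) by $\leadsto$T, flip by $\leadsto$C, and repeat, landing on $\lozenge A\rightsquigarrow\lozenge A$; $\star\square$ is identical using $\square A\rightarrow A$ from (T). The rule $\rightsquigarrow$K applies the same trick to both ends of $A\rightsquigarrow B$, using (ii) at $A$ and at $B$ with $\leadsto$C to reorient. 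The monotonicity rules come from K and contraposition: $\lozenge$T is K applied to the contrapositive $B^c\rightarrow A^c$ of $A\rightarrow B$, then contraposed back; ``from $A\rightarrow\square B$ infer $\square A\rightarrow\square B$'' uses K and then $\rightarrow$T with (T) at $\square B$; ``from $A\rightarrow\lozenge B$ infer $\lozenge A\rightarrow\lozenge B$'' uses $\lozenge$T and then $\rightarrow$T with (v). The only care needed anywhere is tracking double negations, which cEx absorbs, so I anticipate no difficulty beyond (vi).
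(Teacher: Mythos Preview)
Your proposal is correct and matches the paper's approach; the paper does not spell out a proof but only remarks that (vi) makes essential use of rule $\sqcup$I, which is exactly the engine you identify and use (proving $\sqcup A\rightarrow\square\sqcup A$ via $\sqcup$I, K, (S), and (4), then contraposing). The remaining items and derived rules are routine and your derivations are accurate.
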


We note that the proof of (vi) makes essential use of rule $\sqcup$I.

The condition $\star A$ in $\rightarrow W$ (called the \emph{weakening} rule) reflects the classical problem that Aristotle's universal quantification "all $A$ is $B$" seems to implicitly assume the non-empty extension of $A$. From the  point of view of modern quanitifier logic  if we state that all unicorns
can recite the \emph{Iliad}  we cannot conclude therefrom that there exists a unicorn that can recite the \emph{Iliad}.
The logic of AML is minimal logic and as such has no negation. AML is not designed to capture the classical Aristolean theory of  contradictories although it would be interesting to considder an extension of the deductive system to accomplish this.  In any case note that  $\neg(\star A \enspace\&  \enspace A\rightarrow B)$ could not be $B \rightsquigarrow A^c$ for standard  negation $\neg$.

We denote by AML$^{S5}$ the extension of AML which results from adding  the analogue of the (S5) axiom:

\[ \lozenge A \rightarrow \square \lozenge A \tag{S5} \]

It does not however appear to be derivable from the current axioms and rules of AML. See the conclusion of this paper for a possible proof strategy.

We denote by $AML^{\blacklozenge}$ the extension of AML which includes
formulas of the form $\blacklozenge \phi$ with $\phi$ an atomic formula together with the rule : from $\phi$ we can derive $\blacklozenge \phi$. The significance of this extension will be explained further ahead.

Given two terms $A$ and $B$ of AML we use the notation $P(A,B)$ to denote
any formula of the form $XA'\bigcirc YB'$ where $\bigcirc$ is either $\rightarrow$ or $\rightsquigarrow$ and $X$ and $Y$ are either empty or one of the symbols $\square, \lozenge, \ominus, \sqcup$ and $A'$ is either $A$ or $A^c$ and $B'$ is either $B$ or $B^c$. We call such a formula a \emph{predication} of $B$ to $A$.

By necessary predication of $B$ to $A$ we mean $A \rightarrow \square B'$ where $B'$ is either $B$ or $B^c$.  We distinguish eight different kinds of \emph{contingent} predication of $B$ to $A$:\\

contingent1: $A \bigcirc \lozenge B'$

contingent2: $A\bigcirc \ominus B'$

contingent3: $\lozenge A \bigcirc \lozenge B'$

contingent4: $\ominus A \bigcirc \ominus B'$

contingent5: $\ominus A \bigcirc \lozenge B'$

contingent6: $\lozenge A \bigcirc \ominus B'$

contingent7: $\lozenge A \bigcirc B'$

contingent8: $\ominus A \bigcirc B'$\\

where $B'$ is either $B$ or $B^c$.
However only contingent1-6 will be of interest to us and of these chiefly contingent1-4.

An \emph{assertoric} predication $B$ to $A$ is $A \rightarrow B'$ where $B'$ is either $B$ or $B^c$.

Given $A$ and $B$ together with  symbols $X$ and $Y$  we have four distinguished types of predication $XA'\bigcirc YB'$:

A corresponds to $XA\rightarrow YB$

E corresponds to $XA\rightarrow YB^c$

I corresponds to $XA\leadsto YB$

O corresponds to $XA\leadsto YB^c$\\

A \emph{syllogism} in $A,B,C$ is a formula of the form

\[ P_1(X_1,X_2)\enspace \&\enspace P_2(X_3,X_4) \Rightarrow P_3(X_5,X_6) \]

in which the $X_i$ for $i=1,...,6$ are elements of $\{A,B,C\}$ and $X_1\neq X_2$, $X_3\neq X_4$ and $X_5\neq X_6$. The first two predications are called the \emph{premises} and the third is called the \emph{conclusion}

	
	
	
	

\begin{definition}
An \emph{extensional model} for AML is a pair $M = (I, V)$ consisting of a  non-empty set individuals $I$ and non-empty set $V$ of valuations $v: T \rightarrow \mathcal{P}(I)$.
\end{definition}

We define the \emph{extension} $E_v(A)$ of a  term $A$ for $v \in V$ inductively:\\

If $A$ is an atomic term $t$ then $E_v(t) = v(t)$

$E_v(A^c)$ = $I/ E_v(A)$

$E_v(\square A) = \bigcap_{v \in V} E_v(A)$

$E_v(\sqcup A) = \{a \in I: \forall v\in V. a\in E_v(A) \vee \forall v\in V. a\notin E_v(A)\}$\\

We define satisfaction of atomic formulas as follows:\\

$M \Vdash A \rightarrow B$ if $E_{v}(A) \subseteq E_{v}(B)$  for all $v \in V$.

$M \Vdash A \leadsto B$ if $E_{v}(A) \cap E_{v}(B) \neq \emptyset$  for all $v \in V$.

$M \Vdash \star A$ if $E_{v}(A)\neq \emptyset$ for all $v\in V$. \\

Satisfaction for complex formulas are defined as usual. 
A formula $\phi$ of AML is \emph{valid}  if it is satisfied in all extensional models.
If this is the case  we write $\Vdash \phi$.

Note that  it follows that $E_v(\lozenge A)  = \{x \in I: \exists w \in V. x \in E_w(A)\}$ and $E_v(\ominus A)  = \{x \in I: \exists u \in V. x \in E_u(A) \text{ and }   \exists w \in V. x \notin E_w(A)   \}$.

The elements of $V$ can be seen as different possible state-of-affairs for a world. Extensional models can be visualised nicely as modified Venn diagrams in which circles are given different colours according to the $v$ they express. Thus two circles may intersect for one colour and be disjoint for another. 

Our models also interpret AML$^\blacklozenge$: for an extensional model $M$ we have $M \Vdash \blacklozenge \phi$ according to the cases:
if $\phi$ is of the form $\star A$ then we interpretation is that same as that of $\star A$. If $\phi$ is $A\rightarrow B$ then the condition holds if there is a $v$ such that $E_v(A) \subset E_v(B)$. Similarly if $\phi$ is $A\rightsquigarrow B$ then the condition holds if there is a $v$ such that $E_v(A)\cap E_v(B)\neq \emptyset$.

The following can be easily checked:

\begin{lemma}
	The rules and axioms of AML, AML$^{S5}$ and AML$^\blacklozenge$ are sound for extensional models.
\end{lemma}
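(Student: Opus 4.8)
The plan is to run the standard soundness argument: fix an extensional model $M=(I,V)$ and check, for every primitive rule, that if the premises are satisfied in $M$ then so is the conclusion, and for every axiom, that it is satisfied in $M$. To handle the rules that discharge hypotheses (notably $\Rightarrow$I) cleanly, I would actually prove by induction on derivations the statement: if $\Gamma\vdash\phi$ then every $M$ with $M\Vdash\gamma$ for all $\gamma\in\Gamma$ has $M\Vdash\phi$. The purely propositional part (MP, $\Rightarrow$I, $\&$I, $\&$E) is then immediate from the classical two-valued reading of $\&$ and $\Rightarrow$ at the level of the model, for which minimal propositional logic is trivially sound; e.g.\ for $\Rightarrow$I one uses the inductive hypothesis for the subderivation of $\psi$ from $\Gamma,\phi$ together with the definition that $M\Vdash\phi\Rightarrow\psi$ iff $M\Vdash\phi$ implies $M\Vdash\psi$.

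For the term-calculus rules I would first record the extension computations that do all the work: $E_v(A^c)=I\setminus E_v(A)$, and $E_v(\square A)$, $E_v(\sqcup A)$, $E_v(\lozenge A)$, $E_v(\ominus A)$ are all \emph{independent of $v$}, equalling respectively $\bigcap_{w}E_w(A)$, the set of $a$ that lie in every $E_w(A)$ or in none, $\bigcup_{w}E_w(A)$, and $\bigcup_{w}E_w(A)\cap\bigl(I\setminus\bigcap_{w}E_w(A)\bigr)$. Given these, each rule is a one-line set-theoretic check: $\rightarrow$T is transitivity of $\subseteq$; $\leadsto$C is symmetry of ``has non-empty intersection''; $\leadsto$I and $\leadsto$T are immediate; cC is the chain $X\subseteq I\setminus Y\iff X\cap Y=\emptyset\iff Y\subseteq I\setminus X$; K is monotonicity of $\bigcap_w$; and $\sqcup$I follows from $E_v(\sqcup A)\subseteq E_v(\square A)\cup E_v(\square A^c)$ (an $a\in E_v(\sqcup A)$ lies in $\bigcap_w E_w(A)$ if $A$ holds of it everywhere, in $\bigcap_w E_w(A^c)$ otherwise). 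The axioms are equally short: (T) holds because $v\in V$, so $\bigcap_w E_w(A)\subseteq E_v(A)$; (S) because ``true everywhere'' is one disjunct in the definition of $\sqcup$; (4) because it is in fact an equality, $E_v(\square\square A)=\bigcap_u\bigcap_w E_w(A)=\bigcap_w E_w(A)=E_v(\square A)$. The same $v$-independence makes the AML$^{S5}$ axiom automatic, since $E_v(\square\lozenge A)=\bigcap_u\bigcup_w E_w(A)=\bigcup_w E_w(A)=E_v(\lozenge A)$, whence $\lozenge A\rightarrow\square\lozenge A$ holds in every model. For the AML$^{\blacklozenge}$ rule ``from $\phi$ infer $\blacklozenge\phi$'' one simply unwinds $M\Vdash\blacklozenge\phi$ in the three cases $\phi=\star A$, $A\rightarrow B$, $A\leadsto B$ and uses $V\neq\emptyset$ to pass from ``for all $v$'' to ``for some $v$''.

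The only step needing genuine care is the pair of replacement rules cEx and $\sqcup$Ex, which I would isolate as a preliminary \emph{congruence lemma}: if $C,D$ are terms with $E_v(C)=E_v(D)$ for all $v$, then substituting $D$ for an occurrence of $C$ in any term $A$ produces a term with the same extension in every $v$, and substituting inside any formula preserves $M\Vdash$. This is a routine induction on the structure of terms (the cases $(-)^c$, $\square(-)$, $\sqcup(-)$ each propagate the hypothesis through the corresponding extension clause) and then on the structure of formulas. Rules cEx and $\sqcup$Ex are the instances $\{C,D\}=\{C,C^{cc}\}$ (since $E_v(C^{cc})=I\setminus(I\setminus E_v(C))=E_v(C)$) and $\{C,D\}=\{\sqcup C,\sqcup C^{c}\}$ (since the condition ``$a$ lies in every $E_w(C)$ or in no $E_w(C)$'' is manifestly symmetric under $C\leftrightarrow C^{c}$). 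With the congruence lemma in place there is no real obstacle: the proof is a finite enumeration of cases, and the derived rules together with the derived implications (i)--(vii) of the previous lemma need no separate verification, since soundness of the primitive rules already entails theirs.
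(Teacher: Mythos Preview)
Your proposal is correct. The paper itself gives no proof beyond the remark ``The following can be easily checked'' preceding the lemma, so your detailed case-by-case verification is exactly the omitted check and there is no alternative approach to compare against. Your isolation of the congruence lemma for the replacement rules cEx and $\sqcup$Ex, and your observation that $E_v(\square A)$, $E_v(\sqcup A)$, $E_v(\lozenge A)$, $E_v(\ominus A)$ are all $v$-independent (which is what makes (4) and (S5) immediate), are precisely the points that make the whole verification routine.
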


It is an open question if $S5$ is independent from AML and whether we have completeness for extensional models.

The following is easy (if somewhat tedious) to show:
\begin{lemma}
	There are no syllogisms in which the premises are particular predications.
\end{lemma}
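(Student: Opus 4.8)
The plan is to produce, for each syllogism whose two premises are particular predications, an extensional countermodel — a model satisfying both premises but refuting the conclusion; by the soundness lemma this suffices. As always in this setting, the syllogism has three distinct terms, with the middle term $B$ occurring in both premises and the conclusion relating $A$ and $C$; using the symmetry of $\leadsto$ we may put the $B$-side first in each premise, so the premises have the form $P_1 = U\,B^{\beta_1}\leadsto U'\,A^{\alpha}$ and $P_2 = W\,B^{\beta_2}\leadsto W'\,C^{\gamma}$, and the conclusion the form $P_3 = V\,A^{\alpha'}\bigcirc V'\,C^{\gamma'}$ with $\bigcirc\in\{\rightarrow,\leadsto\}$, where $U,U',W,W',V,V'$ each lie in $\{\text{empty},\square,\lozenge,\ominus,\sqcup\}$ and the exponents in $\{+,c\}$.

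The first ingredient is a genericity lemma. Fix $V=\{v_1,v_2\}$ and call such a model \emph{generic} if the six sets $v_i(A),v_i(B),v_i(C)$ ($i=1,2$) are in general position, i.e.\ all $2^6$ of their Boolean atoms (intersections of each set or its complement) are non-empty; such models exist, e.g.\ with $|I|=2^6$. Unwinding the clauses defining $E_v(\square A),E_v(\lozenge A),E_v(\sqcup A),E_v(\ominus A)$ shows that in a generic model each set $E_v(X\,t^\epsilon)$ (with $t\in\{A,B,C\}$, $X$ any prefix, $\epsilon\in\{+,c\}$) is a union of atoms depending only on the "$t$-coordinates" $v_1(t),v_2(t)$; it is always non-empty, always a \emph{proper} subset of $I$, and independent of $v$ whenever $X$ is non-empty. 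Hence for distinct atomic terms $t\ne s$ the set $E_v(X\,t^\epsilon)\cap E_v(Y\,s^\delta)$ is again a non-empty union of atoms, so \emph{every} particular predication in two distinct terms holds in a generic model; an equally short atom-count shows every universal predication in two distinct terms \emph{fails} in one.

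This settles the case $\bigcirc=\rightarrow$ at once: a generic model satisfies the particular premises $P_1,P_2$ but not the universal conclusion $P_3$, whose terms $A,C$ are distinct. When $\bigcirc=\leadsto$ the conclusion is itself particular, hence holds in a plain generic model, so here the extreme terms $A$ and $C$ must be \emph{correlated}. The point is that $P_1$ constrains only the joint distribution of the $A$- and $B$-coordinates, $P_2$ only that of the $B$- and $C$-coordinates, while $P_3$ concerns the $A$- and $C$-coordinates. I would therefore build a model that is generic "relative to $B$" — all $2^4$ joint $(A,B)$-patterns and all $2^4$ joint $(B,C)$-patterns realized, so that $P_1$ and $P_2$ still hold by the genericity argument applied to the pairs $\{A,B\}$ and $\{B,C\}$ — but in which, at $v_1$, the proper subsets $E_{v_1}(V\,A^{\alpha'})$ and $E_{v_1}(V'\,C^{\gamma'})$ are forced disjoint. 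Each of these is determined by at most $(v_1(A),v_2(A))$ (resp.\ $(v_1(C),v_2(C))$) and is a proper union of the at-most-four "$A$-atoms" (resp.\ "$C$-atoms"); so one has the freedom to realize only those joint $(A,C)$-patterns whose $A$-part lies outside $E_{v_1}(V\,A^{\alpha'})$ or whose $C$-part lies outside $E_{v_1}(V'\,C^{\gamma'})$, and to give each element witnessing one of the $(A,B)$- or $(B,C)$-patterns such a neutral pattern in the coordinate the relevant premise leaves free.

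The genuinely routine but laborious part — which is why the statement is tedious rather than hard — is the final bookkeeping: verifying, uniformly over the choices of prefixes $U,U',W,W',V,V'$ and exponents, that the neutral $(A,C)$-patterns form a non-empty set compatible with every non-emptiness demand coming from $P_1$ and $P_2$. This reduces to the elementary fact recorded in the genericity lemma, that in a generic model neither $E_v(X\,t^\epsilon)$ nor its complement is ever forced empty, so there is always room both to witness the two premises and to separate the two sides of the conclusion at $v_1$. I expect this verification to be the only real obstacle; everything else is direct computation with the definitions of $E_v$ and $\Vdash$.
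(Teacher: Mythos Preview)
The paper supplies no proof of this lemma beyond the remark that it is ``easy (if somewhat tedious) to show'', so there is nothing substantive to compare your argument against. Your semantic approach via explicit two-valuation countermodels is correct and furnishes exactly the tedious case-analysis the paper waives: in a fully generic model (all $2^6$ Boolean atoms non-empty) each $E_v(X\,t^\epsilon)$ is a non-empty proper union of $t$-atoms, so every particular predication between distinct atomic terms holds and every universal one fails, disposing of the case $\bigcirc={\rightarrow}$; and for a particular conclusion your refinement --- keeping the $(A,B)$- and $(B,C)$-marginals fully generic while restricting the joint $(A,C)$-distribution so that every element lies in the complement $S_A$ or the complement $S_C$ --- goes through precisely because $S_A$ and $S_C$ are non-empty, so every required $(A,B)$- and $(B,C)$-pattern can be witnessed with a neutral third coordinate. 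By soundness this refutes derivability as well as validity.

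One small caveat worth stating explicitly: the paper's formal definition of \emph{syllogism} does not literally require three distinct atomic terms or that the conclusion relate the two extreme terms, so degenerate instances such as $A\leadsto B\ \&\ B\leadsto C\Rightarrow A\leadsto B$ would technically be derivable ``syllogisms'' with two particular premises. Your normalization to a genuine middle term $B$ and a conclusion in $A,C$ tacitly (and reasonably) reads the lemma in the traditional three-figure sense, which is clearly what the paper intends.
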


\begin{remark}  Note that our models do not have, as usual,  a distinguished actual world
	$v_0 \in V$. The interpretation of $A\rightarrow B$ and $A\leadsto B$ is very strong, as suggested by Pr. An. 34b 6 -11.  It is in fact a hidden
	necessitation at the "predicate" level rather than the "term" level (cf. the rule $\rightarrow$K). A weaker interpretation of the assertoric in terms of an actual world would cause a great deal of problems. Our approach also saves us from having to deal with restrictions on the hypotheses in our natural deduction presentation of rule K. Rule K can be seen as the analogue of using  classical necessitation followed by K in the classical sense. One can perhaps think of our atomic formulas being prefixed by an invisible $\blacksquare A\rightarrow B$. 
	 In Aristotle there is in fact no great difference between syllogisms with only assertoric premises and those with only necessary premises.
	 The term \emph{anangkê} is also used at the metalogical level, in particular in the sense that the conclusion of a syllogism follows by necessity.
	 In the
	case of contingent predication it is quite arguable that Aristotle's concept was not
	a hypothetical predicate-level diamond $\blacklozenge A\rightarrow B$ but contingent predication as we have defined.  
	 Just consider the syntax of frequently used expressions
	of the type \emph{$A$ tô(i) de B panti endekhesthai}: $A$ may apply to all $B$, which should be interpreted for instance as $A\rightarrow \ominus B$. Also it is quite  remarkable that we find rule $\lozenge T$ directly expressed in Pr. An. 36a6-10.
\end{remark}

\section*{Preliminaries on Aristotle's  Theory of the Syllogism}

Let us consider syllogisms in AML with only assertoric predications. Assertoric predications  of $B$ to $A$ can be of type A,E,I or O. The classical
theory of the three figures of the assertoric syllogism is found in the \emph{Prior Analytics} A4-8 and is codified as

\begin{itemize}
	\item First figure: AAA, EAE, AII, EIO
	\item Second figure: EAE, AEE, EIO, AOO
	\item Third figure: *AAI, IAI, AII, *EAO, EIO, OAO
\end{itemize}

in which the first figure is of the form $P_1(B,A)\enspace\&\enspace P_2(C,B)\Rightarrow P_3(C,A)$, the second of the form  $P_1(B,A)\enspace\&\enspace P_2(C,A)\Rightarrow P_3(B,C)$ and the third
of the form  $P_1(C,A)\enspace\&\enspace P_2(C,B)\Rightarrow P_3(A,B)$.
We will allow  freedom on the order of the terms in the conlusion in the second and third figure when it is obvious that one conclusion is inter-derivable from another.  The star * means that we must add a third premise $\star C$ to the syllogism. This is so we can apply the weakening rule.

\begin{lemma} All the purely assertoric syllogisms in the three figures A4-A8 are derivable in AML.
\end{lemma}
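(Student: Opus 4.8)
The plan is to verify the fourteen moods in the list one at a time; in each case a two‑ to four‑line derivation in the natural‑deduction calculus suffices, and the only rules needed are $\rightarrow$T, $\leadsto$T, $\leadsto$C, cC, cEx, together with the weakening rule $\rightarrow$W for the two moods marked with a star. It is worth noting in advance that no analogue of Aristotle's reductio or ecthesis is required: the rule $\leadsto$T already performs the inference ``some $C$ is $X$, all $X$ is $Y$, therefore some $C$ is $Y$'' for whose validity Aristotle had to use those devices, so the classically awkward moods \emph{Baroco} and \emph{Bocardo} become as short as the rest. The only real work is bookkeeping: keeping track of which of $A,B,C$ carries a complement, inserting cEx so that cC is in a position to fire, and respecting the convention, stated above, about the order of terms in the conclusions of the second and third figures.

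The first figure is essentially trivial, which is unsurprising since these moods are close to being the primitive rules themselves. \emph{Barbara} (AAA) and \emph{Celarent} (EAE) are single applications of $\rightarrow$T to $B\rightarrow A'$ and $C\rightarrow B$, with $A'=A$ and $A'=A^{c}$ respectively; \emph{Darii} (AII) and \emph{Ferio} (EIO) are single applications of $\leadsto$T to $C\leadsto B$ and $B\rightarrow A'$. For the second figure, with schema $P_{1}(B,A)\,\&\,P_{2}(C,A)\Rightarrow P_{3}(B,C)$, the extra ingredient is rule cC, which exchanges $X\rightarrow Y^{c}$ for $Y\rightarrow X^{c}$; one sometimes applies cEx first to present a premise $B\rightarrow A$ as $B\rightarrow A^{cc}$ so that cC applies. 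Thus in \emph{Camestres} (AEE) one turns the E‑premise $C\rightarrow A^{c}$ into $A\rightarrow C^{c}$ by cC and chains it after $B\rightarrow A$ by $\rightarrow$T, obtaining $B\rightarrow C^{c}$; \emph{Cesare} (EAE) and \emph{Festino} (EIO) are the mirror image, using cC on the other E‑premise followed by $\rightarrow$T or $\leadsto$T; and \emph{Baroco} (AOO) uses cEx and cC to pass from $B\rightarrow A$ to $A^{c}\rightarrow B^{c}$ and then $\leadsto$T with the O‑premise $C\leadsto A^{c}$.

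The third figure, with schema $P_{1}(C,A)\,\&\,P_{2}(C,B)\Rightarrow P_{3}(A,B)$, follows the same pattern with $\leadsto$C converting the particular premises. For instance, in \emph{Disamis} (IAI), $C\leadsto A$ gives $A\leadsto C$ by $\leadsto$C and then $\leadsto$T with $C\rightarrow B$ gives the conclusion $A\leadsto B$; \emph{Datisi} (AII), \emph{Ferison} (EIO) and \emph{Bocardo} (OAO) are handled identically. The two existential‑import moods, \emph{Darapti} (${*}$AAI) and \emph{Felapton} (${*}$EAO), are precisely where the star is used: from the added premise $\star C$ and one of the two universal premises, $\rightarrow$W produces a particular predication $C\leadsto B$, respectively $C\leadsto A^{c}$, after which the derivation finishes exactly as for \emph{Darii}, respectively \emph{Ferio}, followed by a use of $\leadsto$C.

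I do not expect a genuine obstacle. The one point that needs care is the order of terms in the conclusions of the second and third figures — most visibly for the O‑conclusions of \emph{Felapton}, \emph{Ferison} and \emph{Bocardo}, where Aristotle's ``some $B$ is not $A$'' is $B\leadsto A^{c}$, and so is reached, by the stated convention, through the $\leadsto$C‑variant rather than the literally templated $A\leadsto B^{c}$. The real content of the lemma is just that the small rule set of AML, and in particular the transitivity rule $\leadsto$T, is already strong enough to absorb all of Aristotle's metalogical reductions of the assertoric syllogisms in A4--A8.
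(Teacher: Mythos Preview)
Your proposal is correct and follows essentially the same approach as the paper: the paper gives explicit natural-deduction derivations for four representative moods (Darapti, Felapton, Bocardo, Baroco) using exactly the rule set you identify ($\rightarrow$T, $\leadsto$T, $\leadsto$C, cC, cEx, $\rightarrow$W), while you sketch the whole list more systematically. One small imprecision: when you say the weakened Darapti and Felapton ``finish exactly as for \emph{Darii}, respectively \emph{Ferio}'', the shapes you actually obtain after weakening are those of \emph{Datisi} and \emph{Ferison} (third figure), not the first-figure moods---but the derivations go through regardless, and the paper's own proof of Darapti in fact weakens the other universal premise, which is equally valid.
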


\begin{proof}

As an example of a proof in AML consider Aristotle's derivation of the assertoric syllogism AAI of the third figure

\vspace{5mm}

\AxiomC{$\star A$}\AxiomC{$C\rightarrow  A$ }\AxiomC{$C\rightarrow B$ }\TrinaryInfC{$A\rightsquigarrow B$}
\DisplayProof

\vspace{5mm}

It could be expressed:

\vspace{5mm}
\AxiomC{$\star C$}
\AxiomC{$C\rightarrow  A$ }
\BinaryInfC{$C\rightsquigarrow A$}
\UnaryInfC{$A\rightsquigarrow C$}
\AxiomC{$C\rightarrow  B$}\BinaryInfC{$A\rightsquigarrow B$}
\DisplayProof
\vspace{5mm}

The syllogism EAO of the third figure could be proven:

\vspace{5mm}
\AxiomC{$C\rightarrow B$}
\AxiomC{$\star C$}
\AxiomC{$C\rightarrow A^c$}
\BinaryInfC{$C \rightsquigarrow A^c$}
\UnaryInfC{$A^c \rightsquigarrow C$}
\BinaryInfC{$A^c\rightsquigarrow B$}
\UnaryInfC{$B\rightsquigarrow A^c$}
\DisplayProof
	\vspace{5mm}

The syllogism OAO in the third figure is obtained

	\vspace{5mm}
	\AxiomC{$C\rightarrow B$}

	\AxiomC{$C \rightsquigarrow A^c$}
	\UnaryInfC{$A^c \rightsquigarrow C$}
	\BinaryInfC{$A^c\rightsquigarrow B$}
	\UnaryInfC{$B\rightsquigarrow A^c$}
	\DisplayProof
		\vspace{5mm}
		
and that of AOO in the second figure

\vspace{5mm}
\AxiomC{$C\rightsquigarrow A^c$}

\AxiomC{$B \rightarrow A$}
\UnaryInfC{$A^c \rightarrow B^c$}
\BinaryInfC{$C\rightsquigarrow B^c$}

\DisplayProof

\end{proof}

A well-known statement in the \emph{Prior Analytics}   36 b 35-37 a 9 is that universal contingent negatives do not convert. If we interpret "contingent" 
as contingent2 then this is true in AML:

\begin{lemma}
$\nVdash A \rightarrow \ominus B^c \Rightarrow B \rightarrow \ominus A^c$
\end{lemma}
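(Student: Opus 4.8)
The plan is to prove non-validity by exhibiting a single extensional model $M=(I,V)$ in which the premise $A\rightarrow\ominus B^c$ holds but the putative conclusion $B\rightarrow\ominus A^c$ fails. Since $M\Vdash\phi\Rightarrow\psi$ unfolds to ``$M\Vdash\phi$ implies $M\Vdash\psi$'', such an $M$ immediately gives $M\nVdash(A\rightarrow\ominus B^c\Rightarrow B\rightarrow\ominus A^c)$ and hence $\nVdash(A\rightarrow\ominus B^c\Rightarrow B\rightarrow\ominus A^c)$. It clearly suffices to treat the case where $A$ and $B$ are distinct atomic terms.

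First I would unwind the semantics of $\ominus$ using the note after the satisfaction clauses: $E_v(\ominus C)=\{x:\exists u.\,x\in E_u(C)\text{ and }\exists w.\,x\notin E_w(C)\}$, a set independent of $v$, and one that is unchanged if $C$ is replaced by $C^c$ (because $\sqcup C$ and $\sqcup C^c$ have the same extension). Thus $M\Vdash A\rightarrow\ominus B^c$ says exactly that every individual lying in $E_v(A)$ for some $v$ is \emph{two-sided with respect to $B$}, i.e.\ belongs to $B$ under some valuation and to $B^c$ under another; symmetrically for $M\Vdash B\rightarrow\ominus A^c$, with the roles of $A$ and $B$ exchanged.

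The key step is then to rig the model so that the $B$-individuals are \emph{not} two-sided with respect to $A$. I would take $I=\{*\}$ a one-point set and $V=\{v_1,v_2\}$ with $v_1(A)=v_1(B)=\{*\}$, $v_2(A)=\{*\}$ and $v_2(B)=\emptyset$ (the values on the remaining atomic terms being immaterial). Here $*$ is in $B$ under $v_1$ and outside $B$ under $v_2$, so $*\in E_v(\ominus B^c)$ for every $v$, whence $M\Vdash A\rightarrow\ominus B^c$; but $*$ is in $A$ under \emph{both} valuations, so $*\notin E_{v_1}(\ominus A^c)$ while $*\in E_{v_1}(B)$, which yields $M\nVdash B\rightarrow\ominus A^c$.

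I do not expect any real obstacle: the argument is a small model check. The two points needing care are that $\ominus$ is insensitive to complementation of its argument (so that $\ominus B^c$ genuinely imposes the two-sided condition on $B$ itself), and that the asymmetry driving the counterexample is precisely that in this model $A$ behaves ``necessarily'' while $B$ is ``properly contingent'' --- which is exactly Aristotle's own reason (Pr.\ An.\ 36b35--37a9) why the universal contingent negative does not convert. If desired I would also draw the corresponding coloured Venn picture: one circle $A$ covering the point in both colours, one circle $B$ present in one colour and absent in the other.
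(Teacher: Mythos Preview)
Your proof is correct and follows essentially the same strategy as the paper: exhibit an extensional model in which $A$ holds necessarily of the relevant individual(s) while $B$ is properly contingent, so that the premise $A\rightarrow\ominus B^c$ is satisfied but $B\rightarrow\ominus A^c$ fails because nothing is two-sided with respect to $A$. The paper uses Aristotle's own concrete example from Pr.\ An.\ 36a4--10 (two individuals $m,s$ with terms $Man$, $Snow$, $White$, where $m$ is sometimes white and sometimes not, but $s$ is always white and never a man), whereas you strip this down to a minimal one-point, two-valuation abstract model; the underlying idea is identical.
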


\begin{proof}
We can use Aristotle's own example from 36 a 4-10.
Consider a model $M =(I,V)$ with $I = \{m, s\}$ and a AML language with atomic
terms $Man$, $Snow$ and $White$.  V consists of two valuations $v_0$ and $v_1$ with

$v_0(Man) = v_1(Man) = \{m\}$

$v_1(Snow) = v_1(Snow) = \{s\}$

$v_0(White) = \{s\}$

$v_1(White) = \{s,m\}$

That is to say, we have world with two individuals $m$ and $s$ with two possible state of affairs $v_0$ and $v_1$.  The man $m$ is always a $Man$ and
the portion of snow $s$ is always $Snow$. However there is a situation in which 
$m$ is $White$ and a situation in which $m$ is not $White$. $s$ is always $White$. We can check that

\[M\Vdash Man \rightarrow \ominus White^c\]

However we do not have

\[M\nVdash White \rightarrow \ominus Man^c\]

for clearly $s \in E_{v_0}(White)$ and $s$ cannot be both $Man$ and not a $Man$. $s$ is always not a $Man$ and hence $s \notin E_{v_0}(\ominus Man^c)$
\end{proof}

In a famous passage Aristotle argues for the conversion of the contingent particular (and this is used in the proofs of the modal syllogisms in the  third figure). For contingent3 and contingent4 this is immediate.  We note the following fact concerning contingent1 and contingent 2:

\begin{lemma}
 $\nVdash A\leadsto \lozenge B \Rightarrow B \leadsto \lozenge A$ and $\nVdash A\leadsto \ominus B \Rightarrow B \leadsto \ominus A$
\end{lemma}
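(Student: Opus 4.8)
The plan is to refute both implications at once with a single small extensional model, using the explicit descriptions $E_v(\lozenge A)=\{x\in I:\exists w\in V.\,x\in E_w(A)\}$ and $E_v(\ominus A)=\{x\in I:\exists u\in V.\,x\in E_u(A)\text{ and }\exists w\in V.\,x\notin E_w(A)\}$ recorded just after the definition of satisfaction (in particular both sets are independent of $v$). The guiding idea is to arrange that $A$ has the \emph{same} extension in every state while $B$'s extension genuinely varies, and that there is one individual lying in $A$ throughout which also witnesses both $\lozenge B$ and $\ominus B$; then $\lozenge A$ stays small and $\ominus A$ collapses to $\emptyset$, so the left-hand particular predications hold but their converses cannot.

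Concretely I would take $I=\{a,b\}$ and $V=\{v_0,v_1\}$, treat $A$ and $B$ as atomic terms (the remaining atoms being irrelevant), and set $v_0(A)=v_1(A)=\{a\}$, $v_0(B)=\{b\}$, $v_1(B)=\{a,b\}$. The first step is to read off the relevant extensions: $E_v(\lozenge A)=\{a\}$, $E_v(\lozenge B)=\{a,b\}=I$, $E_v(\ominus A)=\emptyset$ (nothing changes its $A$-membership, since $a$ is always in $A$ and $b$ never is), and $E_v(\ominus B)=\{a\}$ (because $a\in E_{v_1}(B)\setminus E_{v_0}(B)$ while $b$ is in $B$ in every state).

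The second step is the verification. Since $E_v(A)=\{a\}\subseteq E_v(\lozenge B)$ and $a\in E_v(\ominus B)$ for every $v$, we get $M\Vdash A\leadsto\lozenge B$ and $M\Vdash A\leadsto\ominus B$. But at the state $v_0$ we have $E_{v_0}(B)\cap E_{v_0}(\lozenge A)=\{b\}\cap\{a\}=\emptyset$ and $E_{v_0}(B)\cap E_{v_0}(\ominus A)=\{b\}\cap\emptyset=\emptyset$, so $M\nVdash B\leadsto\lozenge A$ and $M\nVdash B\leadsto\ominus A$. As the meaning of $\Rightarrow$ is the classical one, $M$ falsifies both conditionals, which yields $\nVdash A\leadsto\lozenge B\Rightarrow B\leadsto\lozenge A$ and $\nVdash A\leadsto\ominus B\Rightarrow B\leadsto\ominus A$.

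I do not expect a genuine obstacle: the only delicate point is the choice of valuations, which must simultaneously keep $A$'s extension state-independent (forcing $\ominus A=\emptyset$ and $\lozenge A=\{a\}$) and let the single individual $a$ inhabit both $\lozenge B$ and $\ominus B$, so that one model settles both clauses. If desired one could re-cast this in concrete Aristotelian terms, in the spirit of the $Man$/$Snow$/$White$ example used for the universal case above, but the bare two-point, two-state model is the most economical route.
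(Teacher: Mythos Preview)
Your countermodel is correct: with $I=\{a,b\}$, $V=\{v_0,v_1\}$, $v_i(A)=\{a\}$, $v_0(B)=\{b\}$, $v_1(B)=\{a,b\}$, one indeed gets $E_v(\lozenge A)=\{a\}$, $E_v(\ominus A)=\emptyset$, $E_v(\lozenge B)=I$, $E_v(\ominus B)=\{a\}$, and the verifications of $M\Vdash A\leadsto\lozenge B$, $M\Vdash A\leadsto\ominus B$, $M\nVdash B\leadsto\lozenge A$, $M\nVdash B\leadsto\ominus A$ go through exactly as you state.

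There is nothing to compare against: the paper states this lemma without proof and moves on. Your argument is entirely in the spirit of the paper's other refutations (the $Man$/$Snow$/$White$ model for the non-conversion of the universal contingent negative, and the $Man$/$Horse$/$White$ model in A17), and in fact your two-point, two-state model is essentially a stripped-down version of the $Man$/$Snow$/$White$ example with the roles of the two predicated terms swapped. One small stylistic point: you write ``$E_v(A)=\{a\}\subseteq E_v(\lozenge B)$'' when checking $A\leadsto\lozenge B$, but the satisfaction clause for $\leadsto$ only requires non-empty intersection, not inclusion; the inclusion is of course sufficient, so nothing is wrong, but you may want to phrase it directly in terms of $E_v(A)\cap E_v(\lozenge B)\neq\emptyset$ to match the definition.
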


Aristotle's arguments sometimes hinge on  the conversion of the necessary negative universal.  Also arguments which make this move have also  been argued by certain scholars to be fallacious (cf.\cite{loeb}[301]).

The interpretation of necessary predication we shall be using in the analysis of the modal syllogism is $A\bigcirc \square B$. It is remarkable that we have the following:

\begin{lemma}
	In AML$^{S5}$ we have
	
	\[C \rightarrow \square B^c \Rightarrow B \rightarrow \square C^c\]
\end{lemma}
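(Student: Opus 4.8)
The plan is to derive $B \rightarrow \square C^c$ from the single hypothesis $C \rightarrow \square B^c$ inside a $\Rightarrow$I box, chaining together the complementation rule cC, the rule K, the (S5) axiom, and item (ii) of the preceding Lemma (the scheme $A \rightarrow \lozenge A$).

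First I would put the hypothesis into a shape to which cC applies, using the definitional abbreviation $\lozenge B \equiv (\square B^c)^c$. By cEx we may insert a double complement on the subterm $\square B^c$, rewriting $C \rightarrow \square B^c$ as $C \rightarrow (\square B^c)^{cc}$, and $(\square B^c)^{cc}$ is literally $(\lozenge B)^c$. Now rule cC applies with the first term being $C$ and the complemented term being $\lozenge B$, yielding $\lozenge B \rightarrow C^c$. Applying rule K to this gives $\square\lozenge B \rightarrow \square C^c$.

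Next I would assemble the other half. From Lemma (ii) we have $B \rightarrow \lozenge B$, and the (S5) axiom $\lozenge B \rightarrow \square\lozenge B$ composes with it via $\rightarrow$T to give $B \rightarrow \square\lozenge B$. One final application of $\rightarrow$T, composing $B \rightarrow \square\lozenge B$ with $\square\lozenge B \rightarrow \square C^c$, yields $B \rightarrow \square C^c$; discharging the hypothesis with $\Rightarrow$I finishes the derivation.

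I expect the obstacle here to be bookkeeping rather than substance: one must take care that the move from $C \rightarrow \square B^c$ to $\lozenge B \rightarrow C^c$ is licensed, which is exactly why the cEx rewriting to the form $C \rightarrow (\lozenge B)^c$ is carried out explicitly before invoking cC. It is also worth noting why the argument is stated for AML$^{S5}$ rather than AML: semantically the formula $C \rightarrow \square B^c$ and its converse $B \rightarrow \square C^c$ both express the same symmetric disjointness condition on $\bigcup_{v} E_v(B)$ and $\bigcup_{v} E_v(C)$, so the implication is already valid over extensional models for plain AML; but the derivation above passes essentially through $\lozenge B \rightarrow \square\lozenge B$, and — consistently with the remark that (S5) does not appear to be derivable in AML — I do not see a syntactic derivation that avoids it.
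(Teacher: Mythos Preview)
Your proof is correct and follows essentially the same route as the paper's: from the hypothesis obtain $\lozenge B \rightarrow C^c$, apply K to get $\square\lozenge B \rightarrow \square C^c$, and then chain $B \rightarrow \lozenge B$ with the (S5) axiom to finish. The only difference is that you spell out the cEx/cC bookkeeping that the paper leaves implicit under ``by definition of $\lozenge$''.
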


\begin{proof}

Assume $C \rightarrow \square B^c$. Then it is easy to show that
by definition of $\lozenge$ we have $\lozenge B \rightarrow C^c$. Apply K
to obtain $\square \lozenge B \rightarrow \square C^c$. Since we have $B \rightarrow \lozenge B$ by lemma 2.3 (ii) the result follows from the S5 axiom.	
	
	\end{proof}

This solves the problem in \cite{call}[p.20]

\section*{First Figure}

A syllogism is of the \emph{first figure} if it is a formula of the form 

\[ P_1(B,A)  \enspace \& \enspace P_2(C,B) \Rightarrow  P_3(C,A) \]

We call the first atomic formula the \emph{major} premise and the second
formula the \emph{minor} premise. 

The following is straightforward:

\begin{lemma} There is no syllogism in the first figure in which the major premise has a particular arrow.
\end{lemma}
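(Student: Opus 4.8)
\emph{Proof strategy.} It suffices to exhibit, for every first-figure formula whose major premise $P_1(B,A)$ carries the particular arrow $\leadsto$ --- with arbitrary modal prefixes on all six term-occurrences, arbitrary complementations, and arbitrary choice of $\to$ or $\leadsto$ in the minor premise and the conclusion --- an extensional model $M=(I,V)$ satisfying both premises but not the conclusion. By the soundness lemma such a countermodel shows the formula is not derivable, hence (reading \emph{syllogism}, as one must for the earlier lemma to be non-vacuous, as a derivable formula of the prescribed shape) not a syllogism; and if one instead reads \emph{syllogism} as \emph{valid} formula, the countermodel is again exactly what is needed.

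The guiding observation is that a \emph{particular} major premise is logically very weak: $\sigma B'\leadsto\tau A'$ (with $\sigma,\tau$ modal prefixes, $B'\in\{B,B^c\}$, $A'\in\{A,A^c\}$) only requires, in each valuation $v$, \emph{one} individual lying in both $E_v(\sigma B')$ and $E_v(\tau A')$, and it forces no containment at all between the $B$-side and the $A$-side. So I would build $M$ in three stages. First, choose the extensions of $A,A^c$ and of $C,C^c$ on a small universe $I$ with $V=\{v_0,v_1\}$ so that the conclusion $P_3(C,A)=\rho C'\bigcirc\theta A'$ fails: if $\bigcirc$ is ${\to}$, put some individual into $E_v(\rho C')\setminus E_v(\theta A')$ for some $v$; if $\bigcirc$ is ${\leadsto}$, arrange $E_v(\rho C')\cap E_v(\theta A')=\emptyset$ for some $v$. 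Secondly, since the extensions of $B,B^c$ are otherwise free, define them so that the minor premise $P_2(C,B)$ holds (in the $\to$ case, e.g., make the predicate's extension contain the subject's at both valuations). Thirdly, add one dedicated ``witness'' individual $w$ lying in $E_v(\sigma B')\cap E_v(\tau A')$ for each $v$ --- tailoring how $w$ is classified by $A'$ and $B'$ to whatever $\sigma,\tau$ demand (permanent membership for $\square$, membership in some valuation for $\lozenge$, contingency for $\ominus$, stable classification for $\sqcup$) --- while keeping $w$ neutral for the minor premise and the conclusion.

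Two valuations suffice, and at least two are \emph{necessary} whenever $\sigma$ or $\tau$ is $\ominus$: with a single valuation $\ominus D=\emptyset$, so a $\ominus$-headed particular major would be vacuously false and the implication trivially valid in that model. With $V=\{v_0,v_1\}$ everything is explicitly computable --- $\square D=E_{v_0}(D)\cap E_{v_1}(D)$, $\lozenge D=E_{v_0}(D)\cup E_{v_1}(D)$, $\sqcup D=\{x\mid v_0,v_1\text{ classify }x\text{ alike}\}$ and $\ominus D=I\setminus\sqcup D$ --- so once $E_{v_0}(A),E_{v_1}(A),E_{v_0}(C),E_{v_1}(C)$ are chosen, all of $\rho C'$ and $\theta A'$ are fixed. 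The verification therefore collapses to a finite table: group the majors by whether $\ominus$ occurs, group the conclusions by which of the ``large'' modal terms ($\sqcup C'$, $\lozenge C'$, $\sqcup A'$, $\lozenge A'$) sits as subject or predicate, and check each combination against a model of the above kind.

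The main obstacle is exactly this bookkeeping at the third stage: one must check that the witness $w$ can always be inserted \emph{without} re-validating the conclusion, i.e. that a single model can simultaneously make the (prefix-laden but only particular) major true and the (fully specified) conclusion false. The delicate sub-cases are those in which the conclusion's subject is $\sqcup C'$ (typically large, hence hard to keep disjoint from the $A$-side) or $\ominus C'$ (possibly empty, which would make a $\to$-conclusion vacuously true), and those in which its predicate is $\lozenge A^c$ or $\sqcup A'$ (which must be kept proper subsets of $I$, so that some individual is permanently, and some only contingently, in $A$). In each of these one uses the point that such permanence/emptiness constraints concern only how $A$ and $C$ relate, whereas the particular major asks merely for one $B$-side-and-$A$-side individual per valuation and can always be met with a spare element; beyond that, the argument is routine.
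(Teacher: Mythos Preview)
The paper does not actually prove this lemma; it is introduced with ``The following is straightforward'' and no argument follows (just as the earlier lemma forbidding syllogisms from two particular premises is described as ``easy (if somewhat tedious)'' and likewise left unproved). Your countermodel approach is the natural way to fill this gap and is, as far as one can judge, what the author has in mind.

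Your strategy is sound. The controlling observation --- that a particular major $\sigma B'\leadsto\tau A'$ asks only for one shared individual per valuation and forces no inclusion between the $B$-side and the $A$-side --- is exactly why no conclusion $P_3(C,A)$ can be forced, and your three-stage construction (fix $A,C$ to falsify the conclusion; fix $B$ to satisfy the minor; plant a witness $w$ for the major) works. Your remark that two valuations are necessary once $\ominus$ appears is apt. One refinement worth spelling out: when the conclusion's subject is $\sqcup C'$, the ``neutrality'' of the fresh witness $w$ with respect to the conclusion must be achieved by making $w$ \emph{unstable} for $C$ (so that $w\notin\sqcup C$), not by keeping $w$ out of $C$ altogether --- keeping $w$ permanently out of $C$ would in fact land it in $\sqcup C$. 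You flag this sub-case as delicate, but the concrete mechanism deserves an explicit sentence rather than being folded into ``routine.'' With that adjustment, your sketch is correct and already more detailed than the paper's own treatment.
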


Given that the major must be universal, it is clear that the proof of a first figure syllogism  must use $\rightarrow$T.  The conclusion will be particular or universal accordingly as the minor is particular or universal. 

We now proceed to give a detailed analysis of Aristotle's treatment of the first figure in the \emph{Prior Analytics}. For this and the remaining figures we make use of Ross's commentary\cite{Ros}[pp.287-369] and notation for syllogism. Patterson\cite{pat} contains a nice table containing all the syllogisms of the three figures. We also follow Ross in distinguishing between contingent  and problematic premises which are denoted by superscripts $c$ (not to be confused with the $c$-operation in AML) and $p$ respectively. In what follows and in the next two sections we will
interpret "apodeictic" as right necessary $A \rightarrow \square B$ but will consider different interpretations of "contingent" in AML, in our terminology contingent1-6.

A8 deals with the case of two apodeictic premises. It is stated that the assertoric syllogisms remain valid if the premises are replaced by their corresponding apodeictic versions.  Consider the validities

\[ B \rightarrow \square A\enspace\&\enspace C\bigcirc \square B\Rightarrow C\bigcirc \square A  \tag{A1} \]

where $\bigcirc$ represents either a universal or particular arrow. Also $A$ may be replaced by $A^c$. Then it is easy
to see that we obtain all the first figure syllogisms:

 $A^nA^nA^n,E^nA^nE^n,A^nI^nI^n,E^nO^nO^n$.

A9 deals with the case in which one premise is apodeictic and another assertoric.

Here we find the most important instance in which Aristotle's  conclusions differs from AML. It seems that Aristotle's use of "assertoric" hovers between a "strong" assertoric (the invisble $\blacksquare (A \rightarrow B)$, in which the inclusion is valid in all states-of-affairs ) and a "weak" notion $\blacklozenge (A \rightarrow B$), in which the inclusion is valid one in one state-of-affairs, for instance, at one time or place, and even apodeictic in the sense of $A\rightarrow \square B$.  Thus in \cite{loeb}[p.190] it is shown that the relation between "man" and "animal" is used by Aristotle as an instance of both an "assertoric" and "apodeictic" predication thus demonstrating that the reading of "assertoric" as "strong assertoric" is legitimate.   That the weak notion is inadequate is shown by the fact that we cannot even derive $A^{\blacklozenge}A^{\blacklozenge}A^{\blacklozenge}$ from it. If there is a situation in which all dogs are sleeping and a situation in which all sleeping things are cats it does not follow that there is a situation in which all dogs are cats (this is related to the problem of "joint possibility" \cite{call}[p.88]).  We can however obtain syllogisms by mixing storng and weak assertoric premises.  For another observation regarding AML and "the problem of the two Barbaras" see the concluding section.

In A9 the following syllogisms are considered valid: $A^n A A^n, E^n A E^n, A^n I I^n, E^n I O^n$. These follow from the validities:

\[ B \rightarrow \square A\enspace\&\enspace C\bigcirc  B\Rightarrow C\bigcirc \square A  \tag{A2} \]

Aristotle argues that the following are invalid: $AA^nA^n, EA^nE^n, AI^nI^n, EI^nO^n$.

These are however syllogisms in AML for we have:

\[  B\rightarrow A \enspace \& \enspace C \rightarrow  \square B \Rightarrow C \rightarrow  \square A \tag{I}\]

which follows from applying K to the first premise.

To show that $AA^nA^n$ is not valid Aristotle produces a counter-example.   In this counter-example the "assertoric" major is not assertoric in the strong sense  but in the weak sense. Indeed the counter-example involves the terms "moving" (K), "animal"(Z) and "man"(A). We have indeed $A\rightarrow \square Z$  but $Z \rightarrow K$ cannot be considered as assertoric in the strong sense for there are  state-of-affairs in nature in which not all animals are in motion. Thus $A^{\blacklozenge} A^nA^n$,  which corresponds to 
\[  B\rightarrow \square A \enspace \& \enspace \blacklozenge (C \rightarrow B) \Rightarrow C \rightarrow  \square A \]

is invalid in AML. This can be easily shown by an extensional model counter-example.  In the same way we can show that $E^{\blacklozenge}A^nE^n$, $A^{\blacklozenge} I^nI^n$ and $E^{\blacklozenge}I^nO^n$ are invalid\footnote{We note that if we discard rule K then we do indeed solve the "problem of the two Barbaras" for then we have A2 but not I. However we cannot derive A1.}.

A14 deals with the case in which both premises are contingent.  The following syllogisms are argued to be valid:  $A^cA^cA^c, E^cA^cE^c, A^cE^cA^c, E^cE^cA^c, A^cI^cI^c, E^cI^cO^c, A^cO^cI^c$. The proofs employ transition from $E^c$ to $A^c$ and $O^c$ to $I^c$. 

We have the following validities in AML:

\[B \rightarrow \ominus A\enspace\&\enspace C \bigcirc \ominus B \Rightarrow C\bigcirc \ominus A \tag{A3}\]

If we interpret "contingent" as contingent2 then the validity of all the  above syllogisms follow from A3 (in which we may need to employ the Ex$\ominus$ rule).   If we use contingent4 then the same follows immediately. We can also validate those syllogisms which do not emply transition in the proof by interpreting "contingent" as contingent1 and using the validity

\[B \rightarrow \lozenge A\enspace\&\enspace C \bigcirc \lozenge B \Rightarrow C\bigcirc \lozenge A \tag{A4}\]

In A14 Aristotle then shows that there is no syllogism with contingent premises in which the major is particular. This is clearly true in AML no matter which interpretation of "contingent" we take.

In A15 Aristotle deals with the case in which one premise is "contingent" and the other "assertoric". He establishes the following cases as valid:

$ A^cAA^c, E^cAE^c,  AA^cA^p,  EA^cE^p,  EE^cE^p,  A^cII^c,  E^cIO^c,
AI^cI^p,  EI^cO^p,  AO^cI^p,  EO^cO^p$.

Let us take  again "contingent" as contingent2 and "problematic" as contingent1. This is justified because in \cite{loeb}[p.275] Aristotle is aware that the conclusion is not contingent2 (\emph{endekhomenon}) but simply $C$ not applying necessarily to $A$, that is, the weaker notion of contingency1 expressed by $A \rightarrow \lozenge B^c$ in AML. We have the following
validities:

\[B \rightarrow  A\enspace\&\enspace C \bigcirc \ominus B \Rightarrow C\bigcirc \lozenge A \tag{A5}\]
\[B \rightarrow  \ominus A\enspace\&\enspace C \bigcirc B \Rightarrow C\bigcirc \ominus A \tag{A6}\]

and it is easy to see that we can validate all the syllogisms above using A5 and A6.

Although Aristotle announces rule $\lozenge$T in this chapter\footnote{see remark 2.7} (which  is necessary in AML to derive the syllogisms we are considering) the proof Aristotle gives, which proceeds by \emph{ad impossibile}, has been found to be insufficient(cf \cite{loeb}[270-271]).

In A15 Aristotle also states that $A^cE$ and $E^cE$ do not yield a syllogism which is immediately verified using extensional model counter-examples. It is also clear that we do not obtain anything from two particular premises.

A16 deals with the case in which one premise is apodeictic and the other contingent. Aristotle begins by stating that all the cases of A15 yield syllogisms if "assertoric" is replaced by "apodeictic".   Let us consider the cases when both premises are universal. Aristotle considers
the following valid:  $A^nA^cA^p,  A^cA^nA^c, E^nA^c E( E^nA^cE^p), E^cA^nE^c, A^nE^cA^p$.

In AML we have the validities:

\[B \rightarrow  \square A\enspace\&\enspace C \bigcirc \ominus B \Rightarrow C\bigcirc \lozenge A \tag{A7}\]
\[B \rightarrow  \ominus A\enspace\&\enspace C \bigcirc \square B \Rightarrow C\bigcirc \ominus A \tag{A8}\]

Interpreting "contingent" and "problematic" as in A15 we validate all the syllogisms in the list except $E^nA^cE$.  It appears that we can only derive
this syllogism in AML$^{S5}$ using $\lozenge \square A \rightarrow \square A$.
 It is interesting that Aristotle in 36a15  seemingly endorses $A \rightarrow \lozenge A$ from lemma 2.3 (for the present interpretation of "problematic"). 
Aristotle's statement that $A^cE^n$ and $E^cE^n$ yield no syllogism can be easily verified using extensional models.

In A16 Aristotle  gives the following list of valid syllogisms in which one premise is particular: 
$E^nI^cO, 	 E^cI^nO^c,	 A^nI^cI^p$.  The last two follow from A8 and A7 and the first is validated in AML$^{S5}$ as for $E^nA^cE$.

\section*{Second Figure}

The second figure is a syllogism of the form

\[ P_1(B, A) \enspace\&\enspace P_2(C,A) \Rightarrow P_3(B,C)  \]

In proving the purely assertoric form of the second figure   in AML rule cC plays a key role. In AML we can generalise this rule in the form of the following easily provable equivalences:\\

c1. $ C \rightarrow \ominus B^c \Leftrightarrow \sqcup B^c  \rightarrow  C^c $

c2. $ C \rightarrow \lozenge B^c \Leftrightarrow \square B \rightarrow  C^c $

c3.   $ C \rightarrow \square B^c \Leftrightarrow  \lozenge B \rightarrow C^c $

c4.   $ C \rightarrow \sqcup B^c \Leftrightarrow  \ominus B^c \rightarrow C^c $\\

In A8 Aristotle deals with the case of two apodeictic premises. We must check the claim  that all the purely assertoric syllogisms of the second figure remain valid if we replace each premise and the conclusion by its corresponding apodeictic version. This follows the validities

\[  B\rightarrow  \square A \enspace \& \enspace C \rightarrow \square A^c \Rightarrow B \rightarrow \square C^c  \tag{B1}\]

proven by using c3 on the second premise and

\[  B\rightarrow  \square A^c \enspace \& \enspace C \rightarrow \square A \Rightarrow B \rightarrow \square C^c  \tag{B2}\]

which is proven by weakening the second premise to $C\rightarrow A$ and applying cC and then  K. 

A10 deals with the case of one assertoric and one apodeictic premise.  The following
are considered valid:  $E^nAE^n,	 AE^nE^n,	E^nIO^n$.  These follow from the validities

\[  B\rightarrow  \square A^c \enspace \& \enspace C \rightarrow  A \Rightarrow B \rightarrow \square C^c  \tag{B3}\]

\[  B\rightarrow   A \enspace \& \enspace C \rightarrow \square A^c \Rightarrow B \rightarrow \square C^c  \tag{B4}\]
and
\[  B\rightarrow  \square A^c \enspace \& \enspace C \rightsquigarrow  A \Rightarrow C \rightarrow \square B^c  \tag{B3}\]

In order to obtain the apodeictic conclusion in B3 (rather than a purely assertoric one) we have used the S5 axiom $\lozenge A \rightarrow \square \lozenge A$.

These syllogisms are invalid if we take "assertoric" to be weak assertoric. For instance for the first syllogism take B as 'sleeping', A as 'man' and C as 'horse'. Then we have $C\rightarrow \square A^c$ and $\blacklozenge (B\rightarrow A)$ for there can be a situation in which the men are asleep and the horses are awake. But clearly we do not have $B\rightarrow \square C^c$ because  there can be a situation in which a horse is sleeping.  We do however get $\blacklozenge (B \rightarrow \square C^c)$.

The following are considered invalid: $A^nEE^n, A^nOO^n, AO^nO^n$

In AML these are in fact valid. However if we take the assertoric premises to be weak assertoric (as is the case in Aristotle's own counter-example) their invalidity can be easily shown by concrete extensional models.

A17 treats of the case of two contingent premises. It is stated that there are no syllogisms wherein both premises are contingent (36b 25-30).  This is related to the fact that we cannot convert a universal contingent negative (at least for contingent1 and contingent2) but only have results like c1 and c2.

Let us consider the contingent2 premises $B\rightarrow \ominus A\enspace\&\enspace C\rightarrow \ominus A$. We can show that
we do not obtain a conclusion $B \rightarrow \ominus C$ by the following model
based on Aristotle's own example (37a 1-10). Let $M =(I,V)$ with
$I = \{m,h\}$ and consider a AML language with atomic terms $Man$, $Horse$ and $White$ and let $V$ consist of $\{v_1,v_2,v_3,v_4\}$ in which
$v_i(Horse) = \{h\}$ and $v_i(Man) = \{m\}$ for $i=1,...,4$ and such that
for $v_1(White) = \{m,h\}, v_2(White) = \{m\}, v_3(White) = \{h\}, v_4(White)=\emptyset$. Here we are considering that the individual man and horse $m$ and $h$ can in fact change their colour in different situations.

Then we have that $M\Vdash Man\rightarrow \ominus White\enspace\&\enspace Horse\rightarrow \ominus White$ but  $M\nVdash Man \rightarrow \ominus Horse$.

A18 treats of the case of one problematic and assertoric premise. The following are considered valid:$	EA^cE^p, A^cEE^p, EE^cE^p, E^cEE^p, EI^cO^p, EO^cO^p$. For contingent1 and contingent2 premises we have the following validities:

\[  B\bigcirc \lozenge A \enspace \& \enspace  C \rightarrow A^c \Rightarrow B \bigcirc \lozenge C^c  \tag{D1}\]

\[ B\bigcirc \ominus A \enspace \& \enspace  C \rightarrow A^c \Rightarrow B \bigcirc \lozenge C^c \tag{D2} \]

where $\bigcirc$ is to be replaced either by a universal or particular arrow.
These validate $A^cEE^p$. $EA^cE^p$ and $EI^cO^p$ are validated by

\[ B \rightarrow A^c\enspace\&\enspace C \bigcirc \lozenge A\Rightarrow C \bigcirc \lozenge B^c \tag{D4} \]

D4 for a contingent2 premise validate s$EE^cE^p$ snf $EO^cO^p$  by
transition (rule Ex$\ominus$). In the same way D2 validates $E^cEE^p$.

 Aristotle considers that $A^cA, AA^c, E^cA$ and $AE^c$ do not yield syllogisms. This can be checked in AML using extensional  model counter-examples.

A19 deals with the case of one apodeictic and one contingent premise. Valid syllogisms
are $ E^nA^cE^p,  E^nA^cE,  A^cE^nE^p, A^cE^nE,  E^nE^cE,  E^nE^cE^p,  E^cE^nE,  E^cE^nE^p$.

We have the following validity in AML:

\[  B\rightarrow \ominus A \enspace \& \enspace  C \rightarrow \square A^c  \Rightarrow B \rightarrow  C^c \tag{E1} \]

This validates $A^cE^nE, E^cE^nE$ and $A^cE^nE^p, E^cE^nE^p$ using $C^c \rightarrow \lozenge C^c$ to obtain a contingent1 conclusion. The other syllogisms can be validated in exactly the same way  by permuting the premises and renaming terms in E1. More generally we have

\[   B \rightarrow \square A^c  \enspace \& \enspace C\bigcirc \ominus A  \Rightarrow C \bigcirc  B^c \tag{E2} \]

Now Aristotle considers that the following premises do not yield a syllogism: 

$A^nE^c, E^cA^n, A^nA^c, A^cA^n$. 

But we can obtain syllogisms from such premises in AML.  The reason for this discrepancy is that in the examples Aristotle adduces  the "apodeictic" premise is not in fact right necessary (the interpretation we have been using) but merely (strong assertoric): it is necessary that everything awake(A) be in movement(M).
This evidently cannot be read as $A \rightarrow \square M$ but merely as strong assertoric $A \rightarrow M$.  

For particular premises Aristotle considers the following as valid:

 $E^nI^cO, E^nI^cO^p, E^nO^cO, E^nO^cO^p$.  
 
 The last two syllogism clearly follow from the first two by transition. The first two syllogisms follow from E2.  The invalid cases  considered by Aristotle are obtained by weakening
 the invalid cases discussed above for two universal premises.

\section*{Third Figure}

The third figure is a formula of the form

\[P_1(C,A)  \enspace \& \enspace P_2(C,B) \Rightarrow P_3(A,B)  \]

It appears that Aristotle is lax about the order of the premises
and thus considers in the third figure also formulas of the form

\[P_1(C,A)  \enspace \& \enspace P_2(C,B) \Rightarrow P_3(B,A)  \]

This follows immediately if $P_3(B,A)$ can be exchanged with $P_3(A,B)$.
If both figures are universal it is crucial to be able to pass to a particular form of one of the premises. This is in fact Aristotle's main strategy.  Hence in this situation we assume tacitly that we have an extra premise $\star C$ in all syllogisms of the third figure with two universal premises.  This is so we can make use of the weaking rule.
What formalisation of contingency should we use in this figure ? Aristotle makes frequent use of the conversion of the contingent particular as well as transition.
Hence an obvious choice would be to use contingent4 ($\ominus \ominus$).  This allows us to follow Aristotle's own proofs very closely. However it turns out that we can validate all the syllogisms in the third figure for other interpretations of contingency, in particular allowing different interpretations of contingency in the premise(s) and in the conclusion. We also follow Ross in considering the distinction between contingent (c) and problematic (p) as legitimately corresponding to two different interpretations of contingent.
 This is interesting because this shows for instances that we get valid syllogisms with contingent conclusion with only contingency1 in a premise. 
We cannot examine here the syllogisms for all the possible combinations of the six notions of contingency in the premises.
In the notation such as $AA^cI^p$  the first $A$ refers to the first premise and $A^c$ to the second.

In A8 we have the case of two apodeictic premises in the third figure. For our usual  interpretation of "apodeictic" these syllogisms are validated by the following syllogisms:

 \[  C\rightarrow \square A \enspace \& \enspace  C \rightarrow \square B \Rightarrow  A \rightsquigarrow \square B \]
 
  \[  C\rightsquigarrow \square A \enspace \& \enspace  C \rightarrow \square B \Rightarrow  A \rightsquigarrow \square B \]
 
Using these it is easy to justify Aristotle's conclusion that if we add "it is necessary" to the premises of all the purely assertoric syllogisms of the third figure we obtain valid syllogisms with apodeitic conclusions.

A11 deals with the case of one contingent and one apodeictic premise. 
For two universal premises valid syllogisms are $A^nAI^n, AA^nI$ and $E^nAO^n$. The first and the third are validated by

\[ C \rightarrow \square A\enspace\&\enspace C\bigcirc B\Rightarrow B\bigcirc \square A \]

To show $AA^nI$ we use

\[ C \rightarrow  A\enspace\&\enspace C\rightarrow \square B\Rightarrow A\rightsquigarrow  \square B \]

It also follows immediately that $A^nII^n$, $IA^nI$ and $E^nIO^n$ are valid.

Aristotle's argues that $EA^nO^n$ is invalid. In fact from such premises
we can only derive $A^c \rightsquigarrow \square B$ or an assertoric conclusion $B\rightsquigarrow A^c$.  That we do not get $B \rightsquigarrow \square A^c$ can be shown by a counter-example.

Consider a language with three atomic terms $A,B,C$ and a model $M = (I,V)$ with $I= \{x,y\}, V = \{v_0,v_1\}$ with
$v_0(A) =  v_1(C) = \{x\}$ and $v_0(C) = v_1(A) = \{y\}$ and let $v_i(B)  = I$ for $i=0,1$.  Then clearly $M \Vdash C \rightarrow A^c$ and $M\Vdash C \rightarrow \square B$. But clearly we $M \nVdash B \rightsquigarrow \square A^c$ since $v_i(\square A^c) = \emptyset$ for $i=0,1$. The same goes
for the invalidity of the conclusion $A \rightsquigarrow \square B^c$.
The same example can be used to show the invalidity of $OA^nO^n$ and $EI^nO^n$.

The other syllogisms with particular premises considered invalid by Aristotle are:

 $AI^nI^n, I^nAI^n,O^nAO^n$.
 
  In AML  however can derive these syllogisms. But consider Aristotle's example
  to invalidate $I^nAI^n$: A is "waking", B is "biped" and C is "animal". Certainly
  we have $C\rightsquigarrow \square B$ but we certainly do not have $C\rightarrow A$ in the AML sense of "assertoric".  Rather we have weak assertoric predication $\blacklozenge(C\rightarrow A)$. The text in fact reads:
  \emph{to de A tô(i) $\Gamma$ endekhetai} (31b30). Indeed if we take $I^nAI^n$
  as $I^nA^cI^n$ then this can be shown to invalid in AML using contingency1 or contingency2 for instance.


A20 deals with the case in which both premises are contingent. Aristotle establishes as valid:
$A^cA^cI^c, E^cA^cO^c , E^cE^cI^c, I^cA^cI^c, E^cI^cO^c, O^cA^cI^c, E^cO^cI^c, O^cE^cI^c$.

 If 'c' in the premises is interpreted as contingent1 then we obtain $A^cA^cI^c$ easily

\[C \rightarrow \lozenge A\enspace\&\enspace C\rightarrow \lozenge B \Rightarrow \lozenge A \rightsquigarrow \lozenge B \tag{C1} \]

with contingent3 conclusion and similarly we can validate $E^cA^cO^c$ but with $\lozenge B \rightsquigarrow \lozenge A^c$ as a conclusion.   In order to get $E^cE^cI^c$
we clearly must have a contingent2 or contingent4 for premise ofr instance. For example
we have

\[C \rightarrow \lozenge A^c\enspace\&\enspace C\rightarrow \ominus B^c \Rightarrow \lozenge B \rightsquigarrow \lozenge A^c \tag{C2} \]

In the proofs of C1 and C2 and similar validities we apply weakening to one of the premises (it does not matter which one). Hence it follows immediately that we validate the syllogisms which result from weakening one of the premises in
$A^cA^cI^c, E^cA^cO^c , E^cE^cI^c$ and hence we can validate in an analogous way
$I^cA^cI^c, E^cI^cO^c, O^cA^cI^c, E^cO^cI^c, O^cE^cI^c$.

A21 deals with the case in which one premise is contingent and another assertoric. According to Aristotle the valid syllogisms with two universal premises are:

$AA^cI^p, 	 A^cAI^c, EA^cO^p, E^cAO^c, AE^cI^p, EE^cO^p$

Consider the first syllogism $AA^cI^p$.

Let as take $c$ as contingent4. Then in AML we can prove a contingent5 conclusion

\[ C \rightarrow A\enspace\&\enspace \ominus C \rightarrow \ominus B \Rightarrow \lozenge A \rightsquigarrow \ominus B \tag{S1} \]

and hence also a contingent4 conclusion

\[ C \rightarrow A\enspace\&\enspace \ominus C \rightarrow \ominus B \Rightarrow \lozenge A \rightsquigarrow \lozenge B \tag{S2}\]

This is interesting because is follows Aristotle's own proof by converting the second premise. However we can also obtain the same conclusions with weaker contingent1 and contingent2 premises such as 

\[ C \rightarrow A\enspace\&\enspace  C \rightarrow \lozenge B \Rightarrow  A \rightsquigarrow \lozenge B \tag{S3} \]

The second syllogism $A^cAI^c$ can be treated in the same way. It is not clear why Ross writes 'c' here but 'p' in the conclusion of the former syllogism.
If in S1 and S2 and S3 we replace $A$ by $A^c$ then we get the same validations for $EA^cO^c$.  According to Aristotle $E^cAO^c, AE^cI^p, EE^cO^p$ follow from the first three syllogisms by conversion.  For $E^cAO^c$ we have
the valid syllogisms

\[ \ominus C \rightarrow  \ominus A^c\enspace\&\enspace  C \rightarrow  B \Rightarrow  \lozenge B \rightsquigarrow \ominus  A^c \tag{S4} \]

\[ C \rightarrow \lozenge A^c\enspace\&\enspace  C \rightarrow  B \Rightarrow  B \rightsquigarrow \lozenge A^c \tag{S5} \]

Note the order of the terms in the conclusion.

We can validate $AE^cI^p$ with

\[ C \rightarrow A\enspace\&\enspace  C \rightarrow \ominus B^c \Rightarrow  A \rightsquigarrow \ominus B \tag{S6} \]

Here the use of the bicontingent seems to be essential in order to apply "transition" (the Ex$\ominus$ rule). We can of course replace the
contingent2 premise with its contingent4 version. $EE^cO^p$ is validated in a similar way to the previous syllogisms.

In the case in which one premise is particular in A21 Aritistotle considers the following valid:

$AI^cI^p, A^cII^c, IA^cI^c, I^cAI^p, EI^cO^p, E^cIO^c, IE^cI^c, O^cAO^p$

$AI^cI^p$ can be validated in the same way as $AA^cI^p$.  $A^cII^c$ cannot be validated with a contingent4 premise:

\[ \ominus C \rightarrow \ominus A\enspace\&\enspace  C \rightsquigarrow  B  \]

But it can with a contingent1, contingent2, contingent3 or contingent5 ($\lozenge\ominus$) premise. The same goes for $IA^cI^c$ and $IE^cI^c$. $I^cAI^p$ and $EI^cO^p$  are validated like $A^cAI^p$ and $EA^cO^p$.

Aristotle validates $O^cAO^p$ by \emph{reductio ad impossibile}. We cannot
of course do this in AML but it is interesting to note that if we take the interpretation of the premises as 
\[ C\rightsquigarrow \lozenge A^c\enspace\&\enspace C\rightarrow B \]

and assume with Aristotle that $B\rightarrow \square A$ then we can prove in AML that $C\rightsquigarrow C^c$. Informally that $B\rightarrow \square A$ is false  means that there is a $B$ such that $(\square A)^c$ holds, that is, $B\rightsquigarrow \lozenge A^c$. But in AML we can prove directly that

\[ C\rightsquigarrow \lozenge A^c\enspace\&\enspace C\rightarrow B \Rightarrow B \rightsquigarrow \lozenge A^c \tag{S7}\]

and analogousy for a contingent2 premise.  We can also validate versions with
contingent3 and contingent4 premises.

Although according to Ross p.365 Aristotle held that there is no conclusion from $I^cE$  we however can validate in AML various syllogisms from such premises (for instance, with a contingent1 premise). For instance $ C\rightsquigarrow \lozenge A\enspace\&\enspace C\rightarrow B^c \Rightarrow \lozenge A \rightsquigarrow \lozenge B^c$.

In A22 Aristotle deals with one contingent and one apodeictic premise. The syllogisms with two universal premises claimed as valid are $A^nA^cI^p, A^cA^nI^c, E^cA^nO^c, E^nA^cO^p,A^nE^cI^p$. 
Using axiom T and lemma 2.5 it is not difficult to reduce these syllogisms to the case of one contingent and one assertoric premise. For instance from S1 and S3 for $A^nA^cI^p$ we validate $A^nA^cI^p$ by

\[ C \rightarrow \square A\enspace\&\enspace \ominus C \rightarrow \ominus B \Rightarrow \lozenge A \rightsquigarrow \ominus B \tag{N1} \]

\[ C \rightarrow \square A\enspace\&\enspace  C \rightarrow \lozenge B \Rightarrow  A \rightsquigarrow \lozenge B \tag{N2} \]

and analogously for $A^cA^nI^c$.  Replacing $B$ with $B^c$ in N1 and using Ex$\ominus$ yields a validation of $A^nE^cI^p$. It is easy to derive

\[ C \rightarrow \square A^c\enspace\&\enspace  C \rightarrow \lozenge B \Rightarrow  \lozenge B \rightsquigarrow \lozenge A^c \tag{N3} \]

which validates $E^nA^cO^p$ with contingent3 conclusion. With a contingent4 or contingent2 premise we obtain a contingent5 $\ominus B \rightarrow \lozenge A^c$ conclusion.
 For $E^c A^n O^c$ we  easily obtain

\[ C\rightarrow \lozenge A^c\enspace\&\enspace C\rightarrow \square B \Rightarrow B \rightsquigarrow \lozenge A^c \tag{N4}\]

and analogously for contingent2, contingent3 and contingent4 premises. For a
contingent4 premise we obtain a contingent5 conclusion

\[ \ominus C\rightarrow \ominus  A^c\enspace\&\enspace C\rightarrow \square B \Rightarrow \lozenge B \rightsquigarrow \ominus A^c \tag{N5}\]

Aristotle considers $A^cE^n$ as not yielding a syllogisms and uses examples to prove this. However in AML we can prove for instance

\[C\rightarrow \lozenge A \enspace\&\enspace C\rightarrow \square B^c \Rightarrow \lozenge A \rightarrow \lozenge B^c\]

 But in Aristotle's  example (sleep-sleeping horse-man, sleep-waking horse-man) the 'contingent' universal does not in fact to correspond to contingent1-6 right  but
to the weak assertoric contingent $\blacklozenge (A\rightarrow B)$ we discussed earlier.  That is, this contingent only expresses that there is a state of affairs in which everything sleeping is a sleeping horse. This is different from stating that in all possible states-of-affairs what is sleeping is possibly a horse or what can sleep is possibly horse, etc.
 With this reading we can confirm that $\blacklozenge (C\rightarrow A)\enspace \&\enspace C\rightarrow \square B^c$ does not yield a syllogism in AML using Aristotle's own example. 

When one premise is particular in A22 Aristotle considers as valid:

$A^nI^cI^p, I^cA^nI^p, A^cI^nI^c, I^nA^cI^c,E^cI^nO^c, O^cA^nO^p, E^nI^cO, O^nA^cO$.

It is clear that if in a proof a syllogism in AML we apply weakening to a premise
then we obtain a syllogism also if we start with the weakened form of that premise. Thus $A^nI^cI^p$, $I^cA^nI^p$ and $O^cA^nO^p$ follow directly from $A^nA^cI^p$, $A^cA^nI^c$. and $E^cA^nO^c$. But what about when the apodeictic premise is weakened as in $A^cI^nI^c, I^nA^cI^c$ and $E^cI^nO^c$ ? For $I^nA^cI^c$ we can derive

\[ C \rightsquigarrow \square A\enspace\&\enspace \lozenge C \rightarrow \ominus B \Rightarrow \lozenge A \rightsquigarrow \ominus B \tag{N6} \]

\[ C \rightsquigarrow  \square A\enspace\&\enspace  C \rightarrow \lozenge B \Rightarrow  A \rightsquigarrow \lozenge B \tag{N7} \]

Note that in N6 the premise must now be contingent5. The syllogisms

$A^cI^nI^c$ and $E^cI^nO^c$ are validated in a similar way.

Consider now the surprising syllogisms $E^nI^cO, O^nA^cO$ which yield
assertoric conclusions. In $AML$ where we can in fact derive

\[C \rightarrow \square A^c\enspace\&\enspace C\rightsquigarrow \lozenge B \Rightarrow \lozenge B \rightsquigarrow A^c \]

that is, from a contingent1 premise we get a contingent7 conclusion.

In $AML^{S5}$ where we can show that $\lozenge \square A \rightarrow \square A$ we have
\[C \rightarrow \square A^c\enspace\&\enspace \lozenge C\rightsquigarrow \lozenge B \Rightarrow \lozenge B \rightsquigarrow A^c \]

and for a contingent4 premise we obtain a contingent8 conclusion.
For $O^nA^cO$ we have  in AML  

\[C \rightsquigarrow \square A^c\enspace\&\enspace C\rightarrow \lozenge B \Rightarrow \lozenge B \rightsquigarrow A^c \]

and similarly for contingent2 and contingent3 premises.  It does not seem that we obtain a valid syllogism for a contingent4 premise but we do for a contingent5 one. The situation for $I^cE^n$ is the same as for the discussion of $A^cE^n$.
This can be shown not to yield a syllogism if we interpret contingent as weak assertoric.

\section{Conclusion}

We were faced with the task of finding an axiomatic-deductive system (preferably with a sound semantics) which meets the following desiderata: \\

i) It should be homogenous in nature with Aristotle's logic. That is, have a similar syntactic style ( distinct from quantifier logic and closer to natural language) that could be easily grasped by Aristotle himself.

ii) The axiomatic-deductive system should economical and elegant and be closer to Aristotle's own proofs than to modern first-order logic.

iii) It should be more flexible and refined than Aristotle's logic in the sense that we can embed Aristotle's system  into it and resolve  ambiguities and seeming inconsistencies by having at our disposal a more refined spectrum of different notions of contingency, apodeictic and assertoric predication to choose from in different contexts. It should allow us look at the problems under a microscope so as to be able to pin-point the problems and easily rectify or slightly and naturally  modify it into a completely consistent system

iv) It should still  have a connection  to modern mathematical logic, specially modern S5 modal propositional and quantifier logic as well as algebraic logic.  The deductive system should ideally be related to natural deduction\footnote{Note that we can see natural deduction quantifier rules at work for instance in Aristotle's proof in 24a14-20} and have a sound semantics similar (but not necessarily identical) to Kripke frames.

v) It should (if possible) furnish us with an interpretation  which validates all the syllogisms  in A8-22 and it should not derive conclusions from pairs of premises which Aristotle considers inconclusive.

vi) It not only validates but extends Aristotle's system and is an interesting system in its own right.\\

The systems of McCall \cite{call} and Malink\cite{malink} build upon the pioneering work of Bocheński\cite {bo}, Becker\cite{beck} and  Łukasiewicz\cite{luka}. Malink's system is one of the most interesting to date as a sound first-order semantics (called\emph{predicable semantics})is also considered which establishes as inconcludent all the syllogisms held to be inconcludent by Aristotle. 
Both McCall's and Malink's system do not meet desiderata i) and ii).   McCall's system is syntactically close to Aristotle but is based on a very  large number of syllogisms  which must be take as axioms and also a fairly large
number of deduction rules.  Malink's system
is based on full  first-order quantifier logic (without equality)  and six axioms involving three primitive binary predicates $\mathbf{A},\mathbf{N}$ and $\mathbf{\hat{N}}$.
In order to express the model syllogisms Malink must build up nine complex first-order notions from these predicates (including
introducing "ontological" distinctions between elements of the model such as the by means of the unary predicate $\mathbf{\Sigma}$). The proofs of the validity of the moods of the modal syllogistic given
in Appendix B of \cite{malink}  are often quite complex.  
Thus it is arguable that AML is much closer to meeting desiderata  i) and ii) than Malink's or McCall's systems.
Note that AML could also be recast in first-order form with three sorts (individuals, concepts, states-of-affairs) and a single primitive trinary predicate $\Delta(x,t,v)$.
Then we would define different modes of predications in AML such as $a\rightarrow \lozenge b$ as a predicate $Q(a,b) \equiv \forall v. \forall x. \Delta(x,a,v) \rightarrow \exists v'. \Delta(x,b,v')$. Then
we get would by first-order logic alone that $Q(a,b) \enspace\&\enspace Q(b,c) \rightarrow Q(a,c)$. 
Note also that AML is quite distinct  (both proof-theoretically and semantically) from early approaches such as Becker's which employ directly modern first-order modal logic and that there is no obvious way to embed AML into it.  AML is only analogous to modern S5 modal logic.

Malink's system does meet iii) . For instance, it is based on two kinds of necessitation just as AML we can have both ordinary $\lozenge$ and strong contingency $\ominus$.  However
Malink's system is based on binary predicates so has a more $\emph{de dicto}$ flavour while the $\emph{de re}$ approach of AML allows us to consider using $\lozenge,\ominus,\square,\sqcup$ (and no symbol at all) up to  25 types of predication between two terms.  For iii) with AML we draw the following conclusions:\\

a) Although it is clear that Aristotle distinguished clearly two types of contingency at least six  (if not eight) inter-related types of contingency have to be considered in order to obtain a clear and fully consistent theory of the modal syllogism. AML also greatly clarifies the problem of the conversion of particular contingents. Also AML$^{S5}$ proves the conversion of the negative necessary universals.

b) We find  in places an ambiguity and confusion between weak and strong assertoric predication and between necessary and strong assertoric predication \cite{loeb}[p.190]) as well as between contingent and weak assertoric predication.  This agrees with Becker's\cite{beck} take on the classical problem of the oscillation between \emph{de dicto} and \emph{de re}.\\

Point iv) is clearly met by AML.  The analogues of modern modal axioms appear as essential to the  structure of AML proofs of Aristotle's theory of the modal syllogism. S5 makes a surprising appearance in a few places. Aristotle seems to have explicitly acknowledged K.  Neither McCall's or Malink's  system seem to have much connection to modern modal logic although
ax$_6$\cite{malink}[p.287] might be read as an analogue of  T.

 As for v)  McCall's only validates a certain limited  amount of Aristotle's syllogisms. Malink's system on the hand
 not only validates Aristotle's conclusions but even furnishes refutations of all the moods held to be inconcludent. There is however still the question of
 the legitimacy of the interpretations given to the various form of predication, as for instance the interpretation of the negative necessary predication  $\mathbb{N}^e a  b$  seems rather forced since it introduces a disjunction $\hat{\mathbf{\Sigma}} a \vee \hat{\mathbf{\Sigma}} b$ in the definition. Also Malink's six axioms can be seen as solving the "problem of the two Barbaras" by positing these two Barbara moods \emph{ad hoc} as axioms.  But it is generally considered that  axioms should express intuitively self-evident truths or propositions generally agreed upon.
 
 AML (or more precisely AML$^{S5}$ for a few
syllogisms) allows us to give interpretations to "contingent", "problematic", "assertoric" and "apodeictic" which validate all the syllogisms in A8-22.  AML however proves conclusions for a fair number of combinations of premises considered inconclusive by Aristotle (though agreeing with Aristotle about the inconclusivity of some non-obvious combinations). In AML we can explain
these discrepancies by examining the counter-examples Aristotle offers. We note that Malink's system also has difficulties agreeing with Aristotle's concrete counter-examples. According to the AML analysis in many of Aristotle's examples  we find an ambiguity or confusion between weak and strong assertoric predication and between necessary and strong assertoric predication as well as between contingent and weak assertoric predication.  

Finally it seems that both AML  and Malink's system fulfilll desideratum vi). 
Also we observe that while AML is connected to extensional models (based on sets of individuals)  Malink's predicative semantics is intensional in flavour (indeed heavily inspired by the ontology and logic of the \emph{Topics}) and based on quantification over terms rather than individuals (cf. \cite{rini}[p.236]), that is, we are arguably in the presence of disguised second-order logic.  Whereas AML required the $\star$ operation to deal with empty extensions and the passage from A to I predication, Malink solves this problem by the first axiom $\mathbf{A}aa$. Obviously
we cannot read this to say: unicorns are unicorns so the term unicorn
has non-empty extension.

There remain the following open problems regarding AML and its extensions:\\

1.  Obtain a complete lists of all valid syllogisms  in AML and its extensions (including all types of predication) perhaps employing an automatic theorem prover. 

2. Investigate the complexity of the simplest model required to refute a non-valid syllogism.

3. Prove the independence of the axiom S5 in AML and find the right extension of AML that can be used to prove completeness for extensional models.

4. Investigate more complex types of models for AML with an accesibility relation  between states-of-affairs. This seems a good strategy to prove the independence of S5. Find such a model in which S5 fails.

5. Investigate modal sorites and analyse the rest of the theory of the Prior and Posterior Analytics.

6. Extend the logic of AML to include the intuitionistic and classical negation rules in order to formalise Aristotle's \emph{reductio ad impossibile} arguments
and his theory of contraries and oppositions. The meta-logic of AML is in fact weaker than the meta-logic used by Aristotle
but we saw an example of Aristotle's use of the classical negation rule  suggesting AML could easily be extended with negation to consistently capture  a larger fragment of Aristotle's meta-logic

7.  A further topic would be to explore the alternative presentation of AML  in
which a binary term operation $\wedge$ is introduced. In this case the semantics looks like a collection of \emph{representations} of a Boolean algebra with operators (in this case a single operator $\square$). 

8. Another approach which comes to mind is a purely algebraic treatment of  AML,  to consider AML as a kind of generalised  lattice with operators, independently of its semantics (representation) in terms of the set-theoretic class of models defined. This might be seen as taking an intensional approach in which we focus on the relationship between concepts.

9. If we drop rule K from AML then we can derive $A^nA^nA^n$, $A^nAA^n$ but not $AA^nA^n$ in the first figure (problem of the two Barbaras). If we extend AML to include disjunction then we might devise
other interpretations of contingency such as $A\rightarrow \lozenge B \enspace\vee\enspace \lozenge A \rightarrow B$ so that many other moods remain valid.  We could thus  investigate weaker versions of AML for modelling the modal syllogistic and attempt to find a suitable semantics for which rule K is no longer sound.

10. Extend this approach to a calculus of relations. Slomkowski has argued in \cite{slo} that the \emph{Topics} includes the rudiments of a
calculus of relations which is given fuller treatment in Galen's \emph{Introduction to Logic} (see the article by Barnes in \cite{Sharp}). The approach in this paper might be extended to binary predicates (relations). Let $\Delta$ be the diagonal relation. Then unary predicates are represented by terms of the form $R\wedge \Delta$. We use $\rightarrow$ and $\leadsto$ as before. We have furthermore a product operation on terms $R\circ S$ and an inversion operator $R^{*}$. We could thus obtain a straightforward extension of AML in which could express the analogue of more complex sentences in first-order logic but with the added benefit of modalities. Algebraically this gives rise to structure similar to that of a \emph{quantale}\cite{Q} but with an operator. 

\section*{Acknowledgments}

The author would like to thank the anonymous referees for many helpful comments and criticisms which greatly contributed to improving the contents and presentation of this paper.

								\end{document}